\newtheorem{theorem}{Theorem}[section]
\newtheorem{corollary}[theorem]{Corollary}
\newtheorem{proposition}[theorem]{Proposition}
\newtheorem{lemma}[theorem]{Lemma}
\newtheorem{observation}[theorem]{Observation}
\newtheorem*{theorem*}{Theorem}
\newtheorem*{corollary*}{Corollary}
\newtheorem*{lemma*}{Lemma}
\newtheorem*{proposition*}{Proposition}
\theoremstyle{definition}
\newtheorem{definition}[theorem]{Definition}
\newtheorem{remark}[theorem]{Remark}
\newtheorem{example}[theorem]{Example}
\newtheorem{setup}[theorem]{Setup}
\newtheorem{thmx}{Theorem}
\DeclareMathOperator{\tor}{Tor}
\DeclareMathOperator{\ext}{Ext}
\DeclareMathOperator{\ann}{ann}
\DeclareMathOperator{\depth}{depth}
\DeclareMathOperator{\Hom}{Hom}
\DeclareMathOperator{\supp}{Supp}
\DeclareMathOperator{\height}{ht}
\DeclareMathOperator{\idim}{id}
\DeclareMathOperator{\grade}{grade}
\DeclareMathOperator{\ass}{Ass}
\title{Bass Numbers of the First Nonzero Local Cohomology Module}
\author{Andrew J. Soto Levins}
\date{}
\begin{document}


\maketitle
\begin{abstract}
Let $R$ be a regular local ring containing a field, let $I$ be an ideal with $d=\height{I}$, and assume $\height{p}=d$ for every minimal prime $p$ of $I$. We compute the Bass numbers $\mu^{0}(q,H_{I}^{d}(R))$ and $\mu^{1}(q,H_{I}^{d}(R))$ for all primes $q$. We then study $\mu^{2}(q,H_{I}^{d}(R))$ by considering the associated primes of $H_{I}^{d+1}(R)$.
\end{abstract}

\section{Introduction}
Let $(R,m,k)$ be a Noetherian local ring and let $M$ be a module. In this setting, $M$ has a minimal injective resolution
$$E=0\rightarrow E^{0}\rightarrow E^{1}\rightarrow\dots$$
that is unique up to isomorphism. Since every injective module can be written uniquely (up to isomorphism) as a direct sum of indecomposable injective modules, the number of summands isomorphic to $E_{R}(R/q)$ for a given prime $q$ that appear in $E^{i}$ is well defined. We call this number the $ith$ number Bass of $M$ with respect to $q$ and denote it by $\mu^{i}(q,M)$. A natural question to ask is when is $\mu^{i}(q,M)$ finite? If $M$ is finitely generated, then $\mu^{i}(q,M)$ is finite for all $i$ and all $q$ (\cite[Theorem 18.7]{matsumura}). In this paper we are interested in Bass numbers of local cohomology. In \cite{hartshorne} Hartshorne gave an example of a local cohomology module $H_{I}^{i}(R)$ over a ring $R$ that is not regular with $\mu^{0}(m,H_{I}^{i}(R))$ infinite. Surprisingly, Huneke and Sharp in \cite[Theorem 2.1]{hunekeone} and Lyubeznik in \cite[Theorem 3.4.d]{lyubeznik} prove that for a regular local ring $R$ containing a field, $\mu^{i}(q,H_{I}^{j}(R))$ is finite for all $i$, all $j$, all ideals $I$ and all primes $q$. Results like this are unexpected since local cohomology modules are not finitely generated in general. The goal of this paper is to give a better understanding of $\mu^{0}(q,H_{I}^{d}(R))$, $\mu^{1}(q,H_{I}^{d}(R))$, and $\mu^{2}(q,H_{I}^{d}(R))$.\newline

Our main result is as follows.
\begin{thmx}[Main Theorem] Let $(R,m,k)$ be a regular local ring containing a field with $n=\dim{R}$, let $I$ be an ideal with $d=\height{I}$, and assume $\height{p}=d$ for every minimal prime $p$ of $I$. Let 
$$E=0\rightarrow E^{0}\rightarrow E^{1}\rightarrow\dots$$
be the minimal injective resolution of $H_{I}^{d}(R)$. Then the following hold.
\begin{enumerate}
\item(Lemma \ref{firsttwobassnumbersanddplusonelocalcoho}) For $i=0,1$ we have
\[\mu^{i}(q,H_{I}^{d}(R))= \begin{cases} 
      1 & q\supseteq I, \height{q}=d+i \text{ and} \\
      0 & \text{otherwise}. 
   \end{cases}
\]
In particular, $\ass{E^{i}}=\{q \ | \ q\supseteq I,\height{q}=d+i\}$ for $i=0,1$.
\item(Lemma \ref{firsttwobassnumbersanddplusonelocalcoho}) If $q\notin\ass{H_{I}^{d+1}(R)}$, $q\supseteq I$, and $\height{q}=d+2$, then $\mu^{2}(q,H_{I}^{d}(R))=1$. 
\item(Proposition \ref{bassnumbersequalityplusone}) If $n-d\geq 3$, $\height{q}=d+2$, and $q\in\ass{H_{I}^{d+1}(R)}$, then
$$\mu^{0}(q,H_{I}^{d+1}(R))+1=\mu^{2}(q,H_{I}^{d}(R)).$$
\end{enumerate}
\end{thmx}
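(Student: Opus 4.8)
The plan is to localize at $q$ and then squeeze the equality out of a spectral sequence with essentially two nonzero entries. First I would observe that Bass numbers localize, and that the purity hypothesis on $I$ is inherited by $IR_{q}$, so we may replace $(R,m,k)$ by $(R_{q},qR_{q},k(q))$; thus we may assume $\dim R=d+2$, $I$ has pure height $d$, and $m\in\ass{H_{I}^{d+1}(R)}$, and the goal becomes $\mu^{2}(m,H_{I}^{d}(R))=\mu^{0}(m,H_{I}^{d+1}(R))+1$. In this situation $\grade{I}=d$, and Hartshorne--Lichtenbaum vanishing (applied to $R$ itself, where $\dim R/I=2$, and to each localization $R_{p}$) shows that $H_{I}^{j}(R)=0$ for $j\notin\{d,d+1\}$ and that $\supp{H_{I}^{d+1}(R)}\subseteq\{m\}$, so $H_{I}^{d+1}(R)$ is $m$-torsion. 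The key structural input is then that $H_{I}^{d+1}(R)$ is \emph{injective}: Lyubeznik's theorem bounds its injective dimension by $\dim\supp{H_{I}^{d+1}(R)}=0$, and a nonzero $m$-torsion injective module with finite socle is a finite direct sum $E_{R}(k)^{\nu}$ with $\nu=\mu^{0}(m,H_{I}^{d+1}(R))\ge 1$; in particular $\ext_{R}^{p}(k,H_{I}^{d+1}(R))=0$ for all $p\ge 1$.

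Next I would run the hyper-Ext spectral sequence obtained by applying $\Hom_{R}(k,-)$ to $\Gamma_{I}(E^{\bullet})$, where $E^{\bullet}$ is a (bounded) minimal injective resolution of $R$. Because $k$ is killed by $I$, every map $k\to E^{j}$ factors through $\Gamma_{I}(E^{j})$, so $\Hom_{R}(k,\Gamma_{I}(E^{j}))=\Hom_{R}(k,E^{j})$; hence the complex $\Gamma_{I}(E^{\bullet})$ of injectives computes $H_{I}^{\bullet}(R)$ and, after applying $\Hom_{R}(k,-)$, computes $\ext_{R}^{\bullet}(k,R)$. The spectral sequence reads
$$E_{2}^{p,s}=\ext_{R}^{p}\!\bigl(k,H_{I}^{s}(R)\bigr)\ \Longrightarrow\ \ext_{R}^{p+s}(k,R),$$
and since $R$ is regular of dimension $d+2$, hence Gorenstein, the abutment is $k$ in cohomological degree $d+2$ and zero in all other degrees. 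By the first paragraph the nonzero entries of $E_{2}$ lie in the row $s=d$ together with the single spot $E_{2}^{0,d+1}=k^{\nu}$, so $d_{2}\colon E_{2}^{0,d+1}\to E_{2}^{2,d}$ is the only possible differential and $E_{3}=E_{\infty}$.

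Reading off the abutment degree by degree then finishes the argument. In degrees $d$ and $d+1$ the abutment vanishes, which forces $E_{2}^{0,d}=E_{2}^{1,d}=0$ and $\ker d_{2}=E_{\infty}^{0,d+1}=0$, so $d_{2}$ is injective; in every degree $\ge d+3$ the abutment vanishes and the row $s=d+1$ already does, so the row $s=d$ must vanish there too, i.e.\ $E_{2}^{p,d}=0$ for $p\ge 3$; and in degree $d+2$ the unique surviving graded piece is $E_{\infty}^{2,d}=\coker d_{2}$, which must therefore be $k$. Hence the row $s=d$ is concentrated in $p=2$ and there is a short exact sequence $0\to E_{2}^{0,d+1}\xrightarrow{\,d_{2}\,}E_{2}^{2,d}\to k\to 0$; counting dimensions gives $\mu^{2}(m,H_{I}^{d}(R))=\mu^{0}(m,H_{I}^{d+1}(R))+1$. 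The only genuinely non-formal step is proving that $H_{I}^{d+1}(R)$ is injective after localizing at a height-$(d+2)$ prime; this is where regularity and the presence of a field are really used, through Lyubeznik's inequality $\idim\le\dim\supp$ — equivalently, through the fact that an object of the relevant category of $\mathcal{D}$-modules (or $F$-modules in positive characteristic) supported at a single closed point is a finite direct sum of copies of $E_{R}(k)$. Everything else — Hartshorne--Lichtenbaum vanishing, identifying the abutment with $\ext_{R}^{\bullet}(k,R)$, and bookkeeping a two-entry spectral sequence — is routine, and as a consistency check the same computation with $H_{I}^{d+1}(R)=0$ recovers the $\mu^{2}=1$ assertion of part~(2) of the Main Theorem.
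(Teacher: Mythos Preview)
Your argument for part~(3) is correct and takes a genuinely different route from the paper's.  Both you and the paper localize at $q$ and invoke Hartshorne--Lichtenbaum vanishing together with the Huneke--Sharp/Lyubeznik bound $\idim\le\dim\supp$ to reduce to the situation where $H_{I}^{d+1}(R)$ is a finite direct sum $E_{R}(k)^{\nu}$.  From there the paper works concretely: it has already built (in its Lemma on Bass numbers) a comparison between the minimal injective resolution of $H_{I}^{d}(R)$ and $\Gamma_{I}(D)$ yielding the short exact sequence
\[
0\longrightarrow H_{I}^{d+1}(R)\longrightarrow \coker(d_{0})\longrightarrow \text{im}\,d_{1}'\longrightarrow 0,
\]
and after localizing at $q$ it identifies the three terms as $E_{R}(R/q)^{\nu}$, $E_{R}(R/q)^{\mu^{2}}$, and $E_{R}(R/q)$ respectively (using Dorreh's equality $\idim H_{I}^{d}(R)=n-d$ for the middle term and HLV for the right term), then counts.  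You instead bypass the explicit comparison with the dualizing complex and run the Grothendieck spectral sequence for $\Hom_{R}(k,-)\circ\Gamma_{I}(-)\cong\Hom_{R}(k,-)$, which collapses to a single $d_{2}$ and delivers the count directly.  Your approach is shorter and self-contained for part~(3)---it does not need parts~(1) and~(2) as input, and indeed recovers $\mu^{0}(q,H_{I}^{d}(R))=\mu^{1}(q,H_{I}^{d}(R))=0$ and $\mu^{p}(q,H_{I}^{d}(R))=0$ for $p\ge 3$ as byproducts---whereas the paper's approach is embedded in a broader comparison-with-the-dualizing-complex framework that also produces the structural exact sequence above, reused elsewhere in the paper.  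The two arguments are morally the same computation viewed through different lenses: your $d_{2}$ differential is exactly what the paper's short exact sequence encodes after applying $\Hom_{R}(k,-)$.
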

The assumption that $R$ is a regular local ring containing a field can be weakened in the first two parts.\newline

In Lemma \ref{whenislocalcohocm} we consider primes $q$ with $\height{q}=\height{I}+2$ and show
$$\ass{E^{2}}\supseteq\{q \ | \ q\supseteq I, \height{q}=\height{I}+2\}.$$
A natural question to ask is when do we have
$$(1)\quad\quad\ass{E^{2}}=\{q \ | \ q\supseteq I, \height{q}=\height{I}+2\}.$$
Under the assumptions of the main theorem, if $d<n$, then $\dim{\supp{H_{I}^{d+1}(R)}}\leq n-d-2$ by Lemma \ref{upperboundfordimensionofsupportoflocalcohomology}. In particular, if $d\geq n-2$, then $\supp{H_{I}^{d+1}(R)}\subseteq \{m\}$. In the next case, when $n-d=3$, we give a characterization for when $(1)$ holds by using the associated primes and injective dimension of $H_{I}^{d+1}(R)$ (see Proposition \ref{nminusdequalsthreewithfourequivalentconditions}). In Section \ref{examples} we provide examples of local cohomology modules that satisfy some of the assumptions and conclusions that show up throughout the paper.\newline

I would like to thank Nawaj KC, Taylor Murray, and Mark Walker for conversations about local cohomology, and Jack Jeffries for such conversations, for his comments on an earlier draft of this paper, and for the suggestion to add Section \ref{examples}. Finally I thank Alexandra Seceleanu for a helpful discussion that led to Example \ref{alexandraexample} and Proposition \ref{alexandraproposition}.

\section{Lower Bound for Injective Dimension}
The goal of this section is to prove Lemma \ref{lemmatodropthethereisaninjectionassumption}, which plays an important role in proving the main theorem. Throughout the paper, we write CM for Cohen Macaulay and MCM for Maximal Cohen Macaulay. The following definition is taken from \cite[Definition 4.6.1]{roberts}.
\begin{definition} Let $R$ be a Noetherian local ring of dimension d. A dualizing complex for $R$ is a complex 
$$D=0\rightarrow D^{0}\rightarrow\dots\rightarrow D^{d}\rightarrow 0$$
so that
\begin{enumerate}
\item The homology of $D$ is finitely generated.
\item Each module $D^{i}$ is a sum of $E_{R}(R/p)$, each occurring exactly once, where $p$ is prime with $\dim{R/p}=d-i$.
\end{enumerate}
\end{definition}

\begin{lemma} \label{dualizingcomlexinjrescanmod} Let $(R,m,k)$ be a CM local ring with canonical module $\omega_{R}$. Then $R$ has a dualizing complex $D$ and $D$ is an injective resolution for $\omega_{R}$.
\end{lemma}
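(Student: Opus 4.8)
The plan is to let $D$ be the minimal injective resolution of $\omega_{R}$ and to check that it satisfies the two conditions in the definition of a dualizing complex; the assertion that $D$ is an injective resolution of $\omega_{R}$ is then automatic. Write $d=\dim R$ and let
$$0\rightarrow\omega_{R}\rightarrow E^{0}\rightarrow E^{1}\rightarrow\cdots$$
be the minimal injective resolution, so that $E^{i}=\bigoplus_{p}E_{R}(R/p)^{\mu^{i}(p,\omega_{R})}$, where $\mu^{i}(p,\omega_{R})=\dim_{\kappa(p)}\ext^{i}_{R_{p}}(\kappa(p),(\omega_{R})_{p})$ and $\kappa(p)=R_{p}/pR_{p}$. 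The whole proof then reduces to the Bass number computation
$$\mu^{i}(p,\omega_{R})=\begin{cases}1 & i=\height{p},\\ 0 & i\neq\height{p}.\end{cases}$$

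To establish this I would invoke two standard facts about canonical modules. First, localization is compatible with the canonical module: for every prime $p$ the ring $R_{p}$ is again Cohen-Macaulay and $(\omega_{R})_{p}\cong\omega_{R_{p}}$. Second, a basic property of the canonical module of a Cohen-Macaulay local ring: if that ring has dimension $h$ and residue field $\ell$, then $\ext^{j}(\ell,\omega)=0$ for $j\neq h$ and $\ext^{h}(\ell,\omega)\cong\ell$ (the canonical module is maximal Cohen-Macaulay of type one and of finite injective dimension, so its Bass numbers vanish outside degree $h$ and equal $1$ in degree $h$). Applying the second fact to $R_{p}$, of dimension $\height{p}$, and using the first, $\ext^{i}_{R_{p}}(\kappa(p),(\omega_{R})_{p})\cong\ext^{i}_{R_{p}}(\kappa(p),\omega_{R_{p}})$ vanishes for $i\neq\height{p}$ and is isomorphic to $\kappa(p)$ for $i=\height{p}$, which is exactly the displayed formula.

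It remains to convert heights into dimensions and assemble the resolution. Since $R$ is Cohen-Macaulay it is catenary and equidimensional, so $\height{p}+\dim R/p=d$ for every prime $p$; hence $\mu^{i}(p,\omega_{R})=1$ precisely when $\dim R/p=d-i$, and is $0$ otherwise. Therefore $E^{i}=\bigoplus_{\dim R/p=d-i}E_{R}(R/p)$ with each indecomposable summand occurring exactly once, which is condition (2) of the definition. Moreover $\height{p}\leq d$ for all $p$, with equality only for $p=m$, so $\mu^{i}(p,\omega_{R})=0$ once $i>d$ and the resolution terminates: $0\rightarrow\omega_{R}\rightarrow E^{0}\rightarrow\cdots\rightarrow E^{d}\rightarrow 0$. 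For condition (1), the homology of this complex is $\omega_{R}$ in degree $0$ and $0$ elsewhere, and $\omega_{R}$ is finitely generated, so the homology of $D$ is finitely generated. Hence $D$ is a dualizing complex, and it is an injective resolution of $\omega_{R}$ by construction.

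I do not expect a substantial obstacle: the argument is an assembly of the localization property of $\omega_{R}$, the $\ext$-with-residue-field characterization of canonical modules, and the dimension formula for Cohen-Macaulay local rings. The one point worth flagging is that condition (2) demands that each relevant injective appear \emph{exactly} once, so it is essential that the top nonvanishing $\ext$ be one-dimensional over the residue field (type one), not merely nonzero; this is precisely what forces $\mu^{i}(p,\omega_{R})=1$ rather than just $\mu^{i}(p,\omega_{R})\geq 1$.
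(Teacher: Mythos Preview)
Your proof is correct, but it proceeds in the opposite direction from the paper. The paper first establishes that $R$ has a dualizing complex by citing external results: Gorenstein local rings have one, a CM local ring with canonical module is a quotient of a Gorenstein local ring (Bruns--Herzog~3.3.6), and dualizing complexes descend along surjections (Roberts~4.6.2). It then uses Roberts~4.6.3 to conclude that $D$ resolves $H^{0}(D)$, and finally identifies $H^{0}(D)$ with $\omega_{R}$ by computing $\ext_{R}^{i}(k,H^{0}(D))$ via the quasi-isomorphism $\Hom_{R}(k,D)\simeq\Hom_{R}(k,E_{R}(k))$. Your argument instead starts from $\omega_{R}$, takes its minimal injective resolution, and verifies directly that the Bass-number formula $\mu^{i}(p,\omega_{R})=\delta_{i,\height p}$ forces this resolution to satisfy the definition of a dualizing complex. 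Your route is more self-contained and constructive: it avoids the ``quotient of Gorenstein'' structure theorem and the machinery in Roberts, relying only on localization of $\omega_{R}$, the depth/type/injective-dimension characterization of canonical modules, and the height--dimension formula for CM rings. The paper's route has the virtue of exhibiting the dualizing complex as an object that exists a priori and then recognizing $\omega_{R}$ inside it, which mirrors how the rest of the paper uses $D$.
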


\begin{proof}
Note that Gorenstein local rings have a dualizing complex by \cite[Theorem 18.8]{matsumura} and \cite[Definition 4.6.1]{roberts}, and $R$ is a quotient of a Gorenstein local ring by \cite[Theorem 3.3.6]{brunsherzog}, so $R$ has a dualizing complex $D$ by \cite[Lemma 4.6.2]{roberts}. Since $R$ is CM, $D$ is an injective resolution of $H=H^{0}(D)$ by \cite[Proposition 4.6.3]{roberts}.\newline

To see that $H$ is the canonical module, note that $H$ has finite injective dimension since it is resolved by $D$. Also, we have the quasiisomorphism
$$\Hom_{R}(k,D)\simeq \Hom_{R}(k,E_{R}(k)),$$ 
so $\ext_{R}^{i}(k,H)=0$ for $i\neq\dim{R}$ and 
$$\ext_{R}^{\dim{R}}(k,H)\cong\Hom_{R}(k,E_{R}(k)),$$
telling us that the type of $H$ is one and $\depth{H}\geq\dim{R}$. Since $H$ is a finitely generated MCM module of finite injective dimension that also has type one, $H$ is a canonical module, so $\omega_{R}\cong H$ since the canonical module is unique up to isomorphism. 
\end{proof}

\begin{lemma} \label{noitorsion} Let $R$ be a Noetherian ring and let $I$ be an ideal. If $q$ is a prime ideal with $\height{q}<\height{I}$, then $E_{R}(R/q)$ has no $I$ torsion, meaning that $\Gamma_{I}(E_{R}(R/q))=0$. In particular, 
$$\Hom_{R}(H_{I}^{i}(\_),E_{R}(R/q))=0$$
for all $i$.
\end{lemma}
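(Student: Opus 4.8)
The plan is to prove the first statement — that $\Gamma_I(E_R(R/q)) = 0$ when $\height q < \height I$ — and then deduce the Hom vanishing. First I would recall the structure of injective modules over a Noetherian ring: $E_R(R/q)$ is supported precisely on $V(q)$, and moreover every element of $E_R(R/q)$ is killed by a power of $q$ after localizing at $q$; more usefully, $(E_R(R/q))_q \cong E_{R_q}(\kappa(q))$ and $E_R(R/q)$ is a $q$-torsion-like module in the sense that its associated prime is exactly $q$. The key point is that $\Gamma_I(E_R(R/q))$ is again an injective $R$-module (a submodule of an injective need not be injective in general, but $\Gamma_I$ applied to an injective is injective over a Noetherian ring), and it is a direct summand of $E_R(R/q)$; since $E_R(R/q)$ is indecomposable, $\Gamma_I(E_R(R/q))$ is either $0$ or all of $E_R(R/q)$.

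So the crux is to rule out $\Gamma_I(E_R(R/q)) = E_R(R/q)$, i.e. to show $E_R(R/q)$ is not $I$-torsion when $\height q < \height I$. If $E_R(R/q)$ were $I$-torsion, then in particular $R/q$, which embeds in $E_R(R/q)$, would be $I$-torsion, forcing $I \subseteq \sqrt{\ann_R(R/q)} = q$ (since every element $\bar 1 \in R/q$ is killed by a power of $I$, we get $I^N \subseteq q$ for some $N$, hence $I \subseteq q$ as $q$ is prime). But $I \subseteq q$ would give $\height I \le \height q$, contradicting the hypothesis $\height q < \height I$. Therefore $\Gamma_I(E_R(R/q)) = 0$.

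For the ``in particular'' clause, I would argue as follows. The local cohomology module $H_I^i(M)$ is $I$-torsion for every $i$ and every $M$ — this is immediate from the fact that $H_I^i(M) = \varinjlim \ext^i_R(R/I^t, M)$, or alternatively from $H_I^i = R^i\Gamma_I$ and $\Gamma_I(H_I^i(M)) = H_I^i(M)$ since $\Gamma_I$ is idempotent on $I$-torsion objects. Hence any homomorphism $f : H_I^i(M) \to E_R(R/q)$ has image inside $\Gamma_I(E_R(R/q))$, because $f$ carries $I$-torsion elements to $I$-torsion elements: if $I^t x = 0$ then $I^t f(x) = 0$. Since $\Gamma_I(E_R(R/q)) = 0$ by the first part, $f = 0$, so $\Hom_R(H_I^i(\_), E_R(R/q)) = 0$.

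The only mild subtlety — the ``main obstacle,'' such as it is — is justifying that $\Gamma_I$ of an injective module is injective (so that the summand argument applies), but this is a standard fact over Noetherian rings, following from Bass's theory of injective modules: $\Gamma_I(E_R(R/p)) = E_R(R/p)$ if $I \subseteq p$ and $0$ otherwise, and $\Gamma_I$ commutes with arbitrary direct sums, so $\Gamma_I$ sends injectives to injectives. In fact one can sidestep the summand language entirely and just use this componentwise description directly: $\Gamma_I(E_R(R/q)) = E_R(R/q)$ iff $I \subseteq q$, and $I \subseteq q$ is impossible when $\height q < \height I$. I would present the argument in this direct form to keep it short.
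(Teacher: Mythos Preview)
Your argument is correct, but the paper's proof is shorter and more elementary. Rather than invoking the structure theory of injectives (that $\Gamma_I$ preserves injectives, that $E_R(R/q)$ is indecomposable, etc.), the paper simply picks an element $x \in I \setminus q$ --- which exists since $\height q < \height I$ forces $I \not\subseteq q$ --- and observes that multiplication by $x$ is an automorphism of $E_R(R/q)$ (a standard fact, e.g.\ \cite[Theorem 18.4]{matsumura}). Hence no nonzero element of $E_R(R/q)$ is killed by $x$, let alone by a power of $I$, so $\Gamma_I(E_R(R/q))=0$. Your approach has the advantage of making the dichotomy $\Gamma_I(E_R(R/p)) \in \{0, E_R(R/p)\}$ explicit, which is conceptually nice and reusable, but it requires citing or reproving that $\Gamma_I$ of an injective is injective; the paper's one-element argument avoids that detour entirely. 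Your treatment of the ``in particular'' clause (any map from an $I$-torsion module lands in $\Gamma_I(E_R(R/q))=0$) is fine and is exactly what the paper leaves implicit.
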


\begin{proof}
Choose a nonzero $x$ in $I\backslash q$. Then multiplication by $x$ induces an automorphism on $E_{R}(R/q)$ by \cite[Theorem 18.4.3]{matsumura}, telling us that no nonzero element of $E_{R}(R/q)$ can be killed by $I$.
\end{proof}

Taylor Murray taught us part 2 of the following lemma.
\begin{lemma} \label{dimsuppequalsnminusd} Let $R$ be a CM local ring with canonical module $\omega_{R}$ and let $I$ be an ideal. Then
\begin{enumerate}
\item $\depth_{I}R=\height{I}=\depth_{I}\omega_{R}$. In particular, $H_{I}^{\height{I}}(\omega_{R})\neq 0$.
\item $\dim{\supp{H_{I}^{\height{I}}(\omega_{R})}}=\dim{R}-\height{I}$ and $\height{p}=\height{I}$ for every associated prime $p$ of $H_{I}^{\height{I}}(\omega_{R})$. 
\end{enumerate}
\end{lemma}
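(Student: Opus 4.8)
The plan is to prove part (1) first and then leverage it for part (2). For part (1), the equality $\depth_I R = \height I$ is standard for a CM local ring: a maximal $R$-regular sequence inside $I$ has length $\grade(I,R) = \height I$ since $R$ is CM, and $\grade$ equals $\depth_I$. For the equality $\depth_I \omega_R = \height I$, I would argue that $\omega_R$ is a maximal Cohen-Macaulay $R$-module, so any $R$-regular sequence in $I$ is also $\omega_R$-regular, giving $\depth_I \omega_R \geq \height I$; conversely, localizing at a minimal prime $p$ of $I$ with $\height p = \height I$, one has $(\omega_R)_p \cong \omega_{R_p}$ (up to the usual shift, which doesn't affect vanishing), and $\depth_{IR_p}(\omega_R)_p \leq \dim R_p = \height I$, so $\depth_I \omega_R \leq \height I$. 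The nonvanishing $H_I^{\height I}(\omega_R) \neq 0$ then follows because $\depth_I \omega_R = \height I$ means $H_I^i(\omega_R) = 0$ for $i < \height I$ and $H_I^{\height I}(\omega_R) \neq 0$.

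For part (2), write $d = \height I$ and $n = \dim R$. The key tool is local duality: since $R$ is CM with canonical module $\omega_R$, for each prime $q$ the completion $\widehat{R_q}$ has a dualizing complex and local duality gives, for any finitely generated $R_q$-module (or more carefully, working with the Matlis dual), a pairing between local cohomology of $\omega_R$ at $q$ and modules that detect when $q \in \supp H_I^d(\omega_R)$. Concretely, I would compute the support by showing $q \in \supp H_I^d(\omega_R)$ iff $H_{IR_q}^d((\omega_R)_q) \neq 0$ iff $\depth_{IR_q}(\omega_R)_q = d$, and since $(\omega_R)_q$ is a canonical module for $R_q$ (again up to shift), this happens iff $\height(IR_q) = d$, i.e. iff $\height q \geq d$ and $q$ contains a minimal prime of $I$ of height $d$... actually more simply: $H_{IR_q}^d((\omega_R)_q) \ne 0$ forces $d \le \dim (\omega_R)_q = \dim R_q = \height q$, giving $\dim\supp H_I^d(\omega_R) \le n - d$; and to see this dimension is achieved, pick a minimal prime $p$ of $I$ with $\height p = d$ and a prime $q \supseteq p$ with $\dim R/q$ maximal, namely $\dim R/q = n - d$ (using that $R$ is catenary), and check $q \in \supp$. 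For the statement about associated primes, if $p \in \ass H_I^d(\omega_R)$ then $p \in \supp H_I^d(\omega_R)$ so $\height p \geq d$; for the reverse inequality $\height p \leq d$, I would use that $(H_I^d(\omega_R))_p$ has depth zero over $R_p$ together with the fact (from local duality / Grothendieck vanishing) that $H_{IR_p}^d((\omega_R)_p) \ne 0$ with $d$ equal to both $\height IR_p$ and $\dim R_p$ would be needed, so $\height p = d$.

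The cleanest route to part (2) is probably to invoke local duality directly: for a complete CM local ring, $H_I^d(\omega_R)^\vee \cong \ext_R^{?}(\ldots)$ type statements, or better, use that $H_I^d(\omega_R)$ is (by the Cohen-Macaulayness hypothesis on the minimal primes, which is exactly the setup of the main theorem, though here we only need it for $\omega_R$) closely related to a "deficiency" module whose support and associated primes are controlled. Since the ambient paper only assumes the equidimensionality hypothesis $\height p = d$ for minimal primes $p$ of $I$ when stating the main theorem — but Lemma \ref{dimsuppequalsnminusd} as stated does not — I should be careful to prove the associated-primes claim without that hypothesis; the argument via "$\ass M \subseteq \supp M$ gives $\height p \ge d$, and depth-zero-after-localization plus Grothendieck vanishing gives $\height p \le d$" works in general.

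The main obstacle I anticipate is part (2)'s claim that \emph{every} associated prime has height exactly $d$ (not just $\ge d$): this requires knowing that $(\omega_R)_p$ localized has its lowest nonvanishing local cohomology in degree equal to $\dim R_p$, which is where being a canonical module (type one, MCM) is essential — a general MCM module could have $H^d_{IR_p}((\omega_R)_p) \ne 0$ with $d < \dim R_p$ while still having $p$ associated. So the heart of the proof is the observation, via Lemma \ref{dualizingcomlexinjrescanmod} and the behavior of canonical modules under localization, that $(\omega_R)_p \cong \omega_{R_p}$ and hence $H^i_{IR_p}((\omega_R)_p) = 0$ for $i \ne \dim R_p$ whenever $IR_p$ is $(R_p)$-primary to the maximal ideal — equivalently, the top local cohomology of a canonical module of an Artinian-modulo-$I$ local ring sits only in top degree, forcing $\height p = d$ for associated primes $p$.
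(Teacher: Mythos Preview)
Your part (1) is fine and matches the paper's argument. The gap is in part (2), specifically the inequality $\height p \le d$ for $p \in \ass H_I^d(\omega_R)$. Your reasoning there is circular: you propose to use that $H^i_{IR_p}(\omega_{R_p}) = 0$ for $i \ne \dim R_p$ \emph{whenever $IR_p$ is $pR_p$-primary}, and then conclude $\height p = d$. But $IR_p$ being $pR_p$-primary is equivalent to $p$ being minimal over $I$, which in a CM ring is exactly the statement $\height p = d$ that you are trying to prove. The ``depth zero after localization plus Grothendieck vanishing'' step does not close this loop: Grothendieck vanishing only gives $d \le \dim R_p = \height p$, the wrong direction, and knowing that $H_{IR_p}^d(\omega_{R_p})$ has depth zero over $R_p$ does not by itself bound $\height p$ from above.

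The paper avoids this entirely by a one-line structural observation. Using Lemma~\ref{dualizingcomlexinjrescanmod}, the dualizing complex $D$ is a minimal injective resolution of $\omega_R$, and by Lemma~\ref{noitorsion} one has $\Gamma_I(D^i)=0$ for $i<d$. Hence
\[
H_I^{d}(\omega_R) \;=\; H^{d}(\Gamma_I(D)) \;=\; \ker\Bigl(\Gamma_I(D^{d}) \to \Gamma_I(D^{d+1})\Bigr) \;\subseteq\; \bigoplus_{\substack{p\supseteq I\\ \height p = d}} E_R(R/p),
\]
so every associated prime of $H_I^{d}(\omega_R)$ is an associated prime of the right-hand side and therefore has height exactly $d$. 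The dimension statement then follows immediately from $\dim R/p = \dim R - \height p$ for CM $R$. You cite Lemma~\ref{dualizingcomlexinjrescanmod} but never extract this embedding, which is the actual content needed; once you write it down, the localization and local-duality detours become unnecessary.
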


\begin{proof}
For the first two equalities, note that $\omega_{R}$ is MCM, so every $R$ regular sequence is an $\omega_{R}$ regular sequence, so $\depth_{I}R\leq \depth_{I}\omega_{R}$. This gives us
$$\height{I}=\depth_{I}R\leq \depth_{I}\omega_{R}\leq\height{I}.$$
For the third equality, notice that if $D$ is the dualizing complex, then
$$\Gamma_{I}(D)=0\rightarrow\bigoplus_{p\supseteq I,\height{p}=\height{I}}E_{R}(R/p)\xrightarrow{f}\bigoplus_{p\supseteq I,\height{p}=\height{I}+1}E_{R}(R/p)\rightarrow\dots$$
by Lemma \ref{noitorsion}, so
$$H_{I}^{\height{I}}(\omega_{R})=H^{\height{I}}(\Gamma_{I}(D))=\text{ker}f\subseteq \bigoplus_{p\supseteq I,\height{p}=\height{I}}E_{R}(R/p).$$
where the first equality is by Lemma \ref{dualizingcomlexinjrescanmod}. Since all the associated primes of $\bigoplus_{p\supseteq I,\height{p}=\height{I}}E_{R}(R/p)$ have height equal to $\height{I}$, all the associated primes of $H_{I}^{\height{I}}(\omega_{R})$ have $\height{I}$, so all the minimal elements in $\supp{H_{I}^{\height{I}}(\omega_{R})}$ have $\height{I}$. Since $R$ is CM, $\dim{R/p}=\dim{R}-\height{p}$ for each minimal prime $p$ in $\supp{H_{I}^{\height{I}}(\omega_{R})}$, so if $p$ is a minimal prime of $H_{I}^{\height{I}}(\omega_{R})$ we have
$$\dim{\supp{H_{I}^{\height{I}}(\omega_{R})}}=\dim{R/p}=\dim{R}-\height{p}=\dim{R}-\height{I},$$
finishing the proof.
\end{proof}

\begin{definition} Let $R$ be a Noetherian local ring and let $M$ be a module. Write $M^{\vee}$ for the Matlis dual of $M$.
\end{definition}

Mark Walker taught us the following fact.
\begin{lemma} \label{nonzeromaplemma} Let $R$ be a Noetherian local ring and let $f:M\rightarrow N$ be a nonzero module homomorphism. Then $f^{\vee}:N^{\vee}\rightarrow M^{\vee}$ is a nonzero module homomorphism.
\end{lemma}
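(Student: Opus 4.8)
The plan is to use that $E_{R}(k)$ is an injective cogenerator for the category of $R$-modules, i.e.\ that $\Hom_{R}(L,E_{R}(k))\neq 0$ for every nonzero $R$-module $L$, and in fact that for every $0\neq x\in L$ there is a homomorphism $L\to E_{R}(k)$ not killing $x$. First I would recall why this holds: given $0\neq x\in L$, Nakayama's lemma shows $Rx/mRx$ is a nonzero $k$-vector space, so there is a surjection $Rx\twoheadrightarrow k$, and composing with the inclusion $k\hookrightarrow E_{R}(k)$ produces a homomorphism $Rx\to E_{R}(k)$ whose value at $x$ is nonzero. Since $E_{R}(k)$ is injective, this map extends along the inclusion $Rx\hookrightarrow L$ to a homomorphism $g\colon L\to E_{R}(k)$ with $g(x)\neq 0$.

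With that in hand the lemma is a one-line diagram chase. Since $f\colon M\to N$ is nonzero, pick $m\in M$ with $f(m)\neq 0$. Apply the previous paragraph to $L=N$ and $x=f(m)$ to get $g\in N^{\vee}=\Hom_{R}(N,E_{R}(k))$ with $g(f(m))\neq 0$. Then $f^{\vee}(g)=g\circ f\in M^{\vee}$, and $(g\circ f)(m)=g(f(m))\neq 0$, so $f^{\vee}(g)\neq 0$; hence $f^{\vee}\colon N^{\vee}\to M^{\vee}$ is a nonzero homomorphism.

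There is no genuine obstacle here: the only substantive ingredient is the cogenerator property of $E_{R}(k)$, which is standard for a Noetherian local ring, and everything else is formal. (One could equivalently phrase the argument as: $\_^{\vee}$ is exact and faithful, so it sends the nonzero map $f$ to a nonzero map; but the explicit element-chase above is shorter and self-contained.)
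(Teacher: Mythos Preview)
Your proof is correct. It differs from the paper's in presentation: the paper argues by contraposition via the double dual, using that the natural maps $\phi_{M}\colon M\to M^{\vee\vee}$ and $\phi_{N}\colon N\to N^{\vee\vee}$ are injective (citing \cite[Theorem 18.6.1]{matsumura}), so that $f^{\vee}=0$ would force $f^{\vee\vee}=0$ and hence $\phi_{N}\circ f=f^{\vee\vee}\circ\phi_{M}=0$, contradicting $\phi_{N}$ injective and $f\neq 0$. Your argument instead unpacks the cogenerator property of $E_{R}(k)$ directly and produces an explicit $g\in N^{\vee}$ with $g\circ f\neq 0$. The two approaches rest on the same fact (injectivity of $\phi_{N}$ is exactly the statement that for every nonzero $x\in N$ some $g\in N^{\vee}$ does not kill $x$), so the difference is cosmetic: your version is self-contained and avoids the double-dual diagram, while the paper's version is terser at the cost of a citation.
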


\begin{proof}
Suppose $f^{\vee}$ is zero so that $f^{\vee\vee}$ is zero. Consider the following commutative diagram

\[\begin{tikzcd}
	{M^{\vee\vee}} &&&& {N^{\vee\vee}} \\
	\\
	\\
	{M} &&&& {N}
	\arrow["{f^{\vee\vee}}", from=1-1, to=1-5]
	\arrow["f", from=4-1, to=4-5]
	\arrow["{\phi_{M}}"', from=4-1, to=1-1]
	\arrow["{\phi_{N}}"', from=4-5, to=1-5]
\end{tikzcd}\]
where the vertical maps are the natural map. The vertical maps are injective by \cite[Theorem 18.6.1]{matsumura}, and so nonzero. Notice that we now have
$$0\neq \phi_{N}f=f^{\vee\vee}\phi_{M}=0,$$
a contradiction.
\end{proof}

\begin{lemma} \label{injectionlemmaone} Let $(R,m,k)$ be a complete CM local ring with canonical module $\omega_{R}$ and $n=\dim{R}$. Let $I$ be an ideal with $d=\height{I}$. If there exists a nonzero map
$$\omega_{R}\rightarrow\Hom_{R}(H_{I}^{d}(\omega_{R}),H_{I}^{d}(\omega_{R})),$$
then $H_{m}^{n-d}(H_{I}^{d}(\omega_{R}))\neq 0$. In particular, we have
$$\idim_{R}H_{I}^{d}(\omega_{R})\geq n-d=\dim{\supp{H_{I}^{d}(\omega_{R})}}.$$
\end{lemma}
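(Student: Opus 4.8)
Write $M=H_I^d(\omega_R)$ and $s=n-d$. By Lemma \ref{dimsuppequalsnminusd} we already know $\dim\supp M=s$, and once $H_m^{s}(M)\neq 0$ has been shown the bound $\idim_R M\geq s$ is automatic: $\Gamma_m$ of the minimal injective resolution of $M$ computes $H_m^{\bullet}(M)$ and is assembled from copies of $E_R(k)$, so it cannot vanish in a cohomological degree in which $H_m^{s}(M)\neq 0$. So the whole content is to prove $H_m^{s}(M)\neq 0$. The plan is to identify $\ext_R^{d}(M,\omega_R)$ with $\Hom_R(M,M)$, feed in the hypothesis through Matlis duality, and then transport the conclusion back to $H_m^{s}(M)$ via local duality.

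For the identification, let $D$ be the dualizing complex of $R$. By Lemma \ref{dualizingcomlexinjrescanmod} it is an injective resolution of $\omega_R$, so $\ext_R^{\bullet}(M,\omega_R)=H^{\bullet}(\Hom_R(M,D))$. Since $M$ is $I$-torsion, every homomorphism $M\to D^{i}$ has $I$-torsion image and hence factors through $\Gamma_I(D^{i})$, so $\Hom_R(M,D^{\bullet})=\Hom_R(M,\Gamma_I(D)^{\bullet})$ as complexes. As computed in the proof of Lemma \ref{dimsuppequalsnminusd}, $\Gamma_I(D)$ is the complex $0\to\bigoplus_{p\supseteq I,\,\height p=d}E_R(R/p)\to\bigoplus_{p\supseteq I,\,\height p=d+1}E_R(R/p)\to\cdots$; in particular $\Gamma_I(D)^{i}=0$ for $i<d=\height I$, and $M=\ker\big(\Gamma_I(D)^{d}\to\Gamma_I(D)^{d+1}\big)$. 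Taking $H^{d}$ of $\Hom_R(M,\Gamma_I(D)^{\bullet})$ (there being no incoming differential, as $\Gamma_I(D)^{d-1}=0$) and applying left exactness of $\Hom_R(M,-)$ to $0\to M\to\Gamma_I(D)^{d}\to\Gamma_I(D)^{d+1}$ gives $\ext_R^{d}(M,\omega_R)\cong\Hom_R(M,M)$.

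Composing the hypothesis with this isomorphism yields a nonzero map $\omega_R\to\ext_R^{d}(M,\omega_R)$, whose Matlis dual is nonzero by Lemma \ref{nonzeromaplemma}; in particular $\ext_R^{d}(M,\omega_R)\neq 0$. Now $R$ is complete, hence has a dualizing complex, and is Cohen--Macaulay, hence that complex is $\omega_R[n]$; Grothendieck local duality therefore gives $H_m^{i}(N)^{\vee}\cong\ext_R^{n-i}(N,\omega_R)$ for every $R$-module $N$ and every $i$. Taking $N=M$ and $i=s$ we get $H_m^{s}(M)^{\vee}\cong\ext_R^{d}(M,\omega_R)\neq 0$, so $H_m^{s}(M)\neq 0$, as needed. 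The step I expect to demand the most care is this final use of local duality for the non-finitely-generated module $M$: one either invokes the derived-category form of Grothendieck local duality over a ring with a dualizing complex, or writes $M=\varinjlim_t \ext_R^{d}(R/I^{t},\omega_R)$ and uses that each term is killed by a power of $I$ while $\grade(I,\omega_R)=d$, so $\ext_R^{d-1}$ against $\omega_R$ vanishes term by term, killing the relevant $\varprojlim^{1}$ and reducing to the finitely generated case. Everything else — the $\ext$-identity and the extraction of a nonzero dual via Lemma \ref{nonzeromaplemma} — is routine.
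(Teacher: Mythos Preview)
Your argument is correct and follows the same overall arc as the paper's: identify $\ext_R^d(M,\omega_R)\cong\Hom_R(M,M)$, feed in the hypothesis, and pass to $H_m^{n-d}(M)$ via a local-duality-type isomorphism. The one substantive difference is precisely the step you flag as delicate. Rather than invoking Grothendieck local duality for the non-finitely-generated module $M$ (and then patching with derived categories or a $\varprojlim^1$ argument), the paper makes this isomorphism completely concrete: since $R$ is CM, the \v{C}ech complex on a system of parameters is a bounded flat resolution of $H_m^n(R)$, so $H_m^i(M)\cong\tor_{n-i}^R(M,H_m^n(R))$ for \emph{any} module $M$; then Hom-tensor adjunction into the injective $E_R(k)$ gives $\tor_{n-i}^R(M,H_m^n(R))^{\vee}\cong\ext_R^{n-i}(M,H_m^n(R)^{\vee})\cong\ext_R^{n-i}(M,\omega_R)$, again with no finiteness hypothesis on $M$. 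This dispatches your concern in one line and avoids both the derived-category machinery and the direct-limit bookkeeping. (Incidentally, once you know $\ext_R^d(M,\omega_R)\neq 0$ you are done, since $H_m^{n-d}(M)^{\vee}\neq 0$ forces $H_m^{n-d}(M)\neq 0$; your invocation of Lemma~\ref{nonzeromaplemma} at that point is harmless but unnecessary, and the paper's extra detour through double-duality and the Artinian reduction is likewise more than is needed for the bare nonvanishing.)
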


\begin{proof}
By Lemma \ref{dimsuppequalsnminusd} we have $n-d=\dim{\supp{H_{I}^{d}(\omega_{R})}}$. Also, if $H_{m}^{n-d}(H_{I}^{d}(\omega_{R}))$ is not Artinian, then $H_{m}^{n-d}(H_{I}^{d}(\omega_{R}))\neq 0$ since zero is Artinian, so we can assume $H_{m}^{n-d}(H_{I}^{d}(\omega_{R}))$ is Artinian.\newline

Let $\underline{x}$ be a maximal $R$ regular sequence and let $C$ be the Cech complex on $\underline{x}$. Since $\sqrt{(\underline{x})}=m$, we have $H_{(\underline{x})}^{i}(\_)\cong H_{m}^{i}(\_)$. Since $R$ is CM, $C$ is a flat resolution of $H_{m}^{n}(R)$, so
$$H_{m}^{i}(H_{I}^{d}(\omega_{R}))\cong H^{i}(C\otimes_{R}H_{I}^{d}(\omega_{R}))\cong \tor_{n-i}(H_{I}^{d}(\omega_{R}),H_{m}^{n}(R)).$$
Applying Matlis duality to both sides with $i=n-d$, we see
\begin{align*}
(2)\quad\quad H_{m}^{n-d}(H_{I}^{d}(\omega_{R}))^{\vee}\cong \tor_{d}^{R}(H_{I}^{d}(\omega_{R}),H_{m}^{n}(R))^{\vee}&\cong \ext_{R}^{d}(H_{I}^{d}(\omega_{R}),H_{m}^{n}(R)^{\vee}) \\
&\cong \ext_{R}^{d}(H_{I}^{d}(\omega_{R}),\omega_{R}),
\end{align*}
where $H_{m}^{n}(R)^{\vee}\cong \omega_{R}$ by \cite[Theorem 3.5.8]{brunsherzog} since $R$ is complete and CM. Since $R$ is CM, $d=\depth_{I}R$, so
$$\ext_{R}^{d}(H_{I}^{d}(\omega_{R}),\omega_{R})\cong \Hom_{R}(H_{I}^{d}(\omega_{R}),H_{I}^{d}(\omega_{R}))$$
by \cite[Proposition 2.2]{mahmood}. Since there exists an a nonzero map
$$\omega_{R}\rightarrow\Hom_{R}(H_{I}^{d}(\omega_{R}),H_{I}^{d}(\omega_{R})),$$
there exists a nonzero map 
$$\ext_{R}^{d}(H_{I}^{d}(\omega_{R}),\omega_{R})^{\vee}\rightarrow \omega_{R}^{\vee}\cong H_{m}^{n}(R)$$
by Lemma \ref{nonzeromaplemma}. Since $R$ is complete and since $H_{m}^{n-d}(H_{I}^{d}(\omega_{R}))$ is Artinian, $H_{m}^{n-d}(H_{I}^{d}(\omega_{R}))\cong H_{m}^{n-d}(H_{I}^{d}(\omega_{R}))^{\vee\vee}$, so $(2)$ gives
$$H_{m}^{n-d}(H_{I}^{d}(\omega_{R}))\cong \ext_{R}^{d}(H_{I}^{d}(\omega_{R}),\omega_{R})^{\vee}.$$
Therefore there exists a nonzero map $H_{m}^{n-d}(H_{I}^{d}(\omega_{R}))\rightarrow H_{m}^{n}(R)$, finishing the proof.
\end{proof}

\begin{lemma} \label{injectionlemmatwo} Let $R$ be a ring and let $M$ be a nonzero module. Then there exists a nonzero map
$$R\rightarrow \Hom_{R}(M,M).$$
\end{lemma}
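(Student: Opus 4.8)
The plan is simply to write down the canonical homothety map and check it is nonzero. Define $\phi \colon R \to \Hom_R(M,M)$ by letting $\phi(r)$ be multiplication by $r$, i.e. $\phi(r)(x) = rx$ for $r \in R$ and $x \in M$. Since $R$ is commutative, each $\phi(r)$ really is an $R$-linear endomorphism of $M$, and $\phi$ is itself an $R$-module homomorphism because $\phi(r+s) = \phi(r) + \phi(s)$ and $\phi(rs)(x) = (rs)x = r\bigl(\phi(s)(x)\bigr) = \bigl(r\,\phi(s)\bigr)(x)$.

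The only step where the hypothesis $M \neq 0$ is used is verifying $\phi \neq 0$: choose $x_0 \in M$ with $x_0 \neq 0$, and observe that $\phi(1)(x_0) = 1 \cdot x_0 = x_0 \neq 0$, so $\phi(1) = \mathrm{id}_M$ is a nonzero element of $\Hom_R(M,M)$, whence $\phi$ is a nonzero map. There is no genuine obstacle here — the content of the lemma is exactly the remark that the identity endomorphism of a nonzero module is nonzero — so the "hard part" is nil; the statement is recorded only so that, together with \Cref{nonzeromaplemma} and \Cref{injectionlemmaone}, one can produce the nonzero maps needed later.
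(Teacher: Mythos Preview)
Your proof is correct and takes essentially the same approach as the paper: both define the homothety map $r \mapsto (x \mapsto rx)$ and verify it is nonzero, the paper by noting that $f=0$ would force $\ann M = R$, you by observing directly that $\phi(1) = \mathrm{id}_M \neq 0$.
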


\begin{proof}
Define $f:R\rightarrow \Hom_{R}(M,M)$ by $r\mapsto \phi_{r}:M\rightarrow M$ where $\phi_{r}(x)=rx$. If $f(r)=0$ for all $r$, then $\phi_{r}=0$ for all $r$, telling us that $\ann{M}=R$, a contradiction.
\end{proof}

Recall that a module $M$ is faithful if $\ann{M}=0$.
\begin{lemma} \label{gradeandfaithfullemma} Let $R$ be a Noetherian local ring and let $M$ be a finitely generated faithful module. Then $\Hom_{R}(M,R)\neq 0$.
\end{lemma}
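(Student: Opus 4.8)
The plan is to produce a nonzero element of $\Hom_{R}(M,R)$ after passing to a suitable localization, using an associated prime of $R$ together with the fact that a faithful finitely generated module has full support. The first step is to unwind faithfulness: $\ann{M}=0$, so $\supp{M}=V(\ann{M})=\spec{R}$, and in particular $M_{p}\neq 0$ for every prime $p$.

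Next I would fix an associated prime $p\in\ass{R}$ (which exists, since $R$ is local and hence nonzero). After localizing at $p$, the ring $R_{p}$ is local with maximal ideal $pR_{p}$, and $pR_{p}\in\ass{R_{p}}$; thus $\depth{R_{p}}=0$, and choosing $0\neq y\in R_{p}$ with $\ann_{R_{p}}(y)=pR_{p}$ gives an injection of the residue field $R_{p}/pR_{p}\hookrightarrow R_{p}$, $1\mapsto y$. On the other hand $M_{p}$ is a nonzero finitely generated module over the local ring $R_{p}$, so by Nakayama $M_{p}/pM_{p}$ is a nonzero $R_{p}/pR_{p}$-vector space and therefore surjects onto $R_{p}/pR_{p}$. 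Composing
$$M_{p}\twoheadrightarrow M_{p}/pM_{p}\twoheadrightarrow R_{p}/pR_{p}\hookrightarrow R_{p}$$
produces a nonzero $R_{p}$-linear map, so $\Hom_{R_{p}}(M_{p},R_{p})\neq 0$.

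Finally, because $M$ is finitely generated and $R$ is Noetherian, $\Hom$ commutes with localization, so $\Hom_{R}(M,R)_{p}\cong\Hom_{R_{p}}(M_{p},R_{p})\neq 0$ and hence $\Hom_{R}(M,R)\neq 0$.

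There is no real obstacle, but a couple of points need care. One cannot simply compose a nonzero map $M\to R/p$ with an embedding $R/p\hookrightarrow R$, because $M/pM$ need not be torsion-free over $R/p$ — it need not embed in $R/p$ at all; localizing and asking only for a surjection onto the residue field avoids this, while the embedding $R_{p}/pR_{p}\hookrightarrow R_{p}$ is available exactly because $p$ was chosen in $\ass{R}$. The other point is that finite generation of $M$ is used both for Nakayama and for the compatibility of $\Hom$ with localization, so it is genuinely needed. (Alternatively, one could invoke the standard identity $\grade(\ann{M},R)=\inf\{\,i:\ext_{R}^{i}(M,R)\neq 0\,\}$, which gives $\Hom_{R}(M,R)\neq 0$ directly from $\grade((0),R)=0$.)
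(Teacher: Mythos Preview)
Your proof is correct. The paper takes a shorter route: since $M$ is faithful, $\ann M = 0$ contains no nonzero regular sequence, so $\grade M = 0$; then the standard identity $\grade M = \inf\{i : \ext_R^i(M,R) \neq 0\}$ immediately gives $\Hom_R(M,R) \neq 0$. Your main argument instead builds a nonzero homomorphism by hand after localizing at an associated prime of $R$, using Nakayama to surject $M_p$ onto the residue field and then embedding the residue field back into $R_p$ via a socle element. This is more elementary and more constructive, avoiding any appeal to the Ext-characterization of grade, at the cost of being longer; the paper's approach is essentially a one-line citation but leans on a nontrivial background result. Amusingly, the parenthetical alternative you mention at the end is exactly the paper's proof.
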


\begin{proof}
By definition, the grade of $M$ is the longest regular sequence in $\ann{M}$, so $\grade{M}=0$ since $M$ is faithful. By \cite[Proposition 1.2.10.e and Definition 1.2.11]{brunsherzog}, we have
$$\grade{M}=\inf\{n|\ext_{R}^{n}(M,R)\neq 0\},$$
so $\Hom_{R}(M,R)\neq 0$
\end{proof}

\begin{lemma} \label{lemmafromfathi} Let $R$ be a CM local domain with canonical module $\omega_{R}$ and let $I$ be an ideal with $d=\height{I}$. Then there exists an injection
$$R\rightarrow\Hom_{R}(H_{I}^{d}(\omega_{R}),H_{I}^{d}(\omega_{R})).$$
\end{lemma}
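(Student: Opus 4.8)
The plan is to use the natural ``multiplication'' homomorphism and show it is injective by localizing at a well-chosen prime. Set $M=H_I^d(\omega_R)$; this is nonzero by Lemma \ref{dimsuppequalsnminusd}. The construction in the proof of Lemma \ref{injectionlemmatwo} produces the homomorphism $f\colon R\to\Hom_R(M,M)$, $r\mapsto(x\mapsto rx)$, whose kernel is exactly $\ann_R M$. Hence the statement reduces to proving that $M$ is a faithful $R$-module.

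To establish faithfulness, fix a minimal prime $p$ of $I$ with $\height p=d$; such a prime exists since $d=\height I$. Because local cohomology commutes with localization and the canonical module localizes, $M_p\cong H^{d}_{IR_p}(\omega_{R_p})$. As $p$ is minimal over $I$, $\sqrt{IR_p}=pR_p$ is the maximal ideal of $R_p$ and $\dim R_p=\height p=d$, so $M_p$ is the top local cohomology of the canonical module of the CM local ring $R_p$; this module is nonzero and, by local duality, is isomorphic to $E_{R_p}(\kappa(p))$.

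The $R_p$-module $E:=E_{R_p}(\kappa(p))$ is faithful: for each $n$ one has $(0:_{E}(pR_p)^n)\cong(R_p/(pR_p)^n)^{\vee}$, whose $R_p$-annihilator is $(pR_p)^n$, so $\ann_{R_p}E\subseteq\bigcap_n(pR_p)^n=0$ by Krull's intersection theorem. Therefore, if $a\in\ann_R M$, its image in $R_p$ lies in $\ann_{R_p}M_p=0$; since $R$ is a domain, $R$ embeds into $R_p$, and so $a=0$. Thus $\ann_R M=0$, $f$ is injective, and the lemma follows.

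I do not anticipate a serious obstacle here; the only point requiring care is the identification $M_p\cong E_{R_p}(\kappa(p))$ — i.e., that the top local cohomology of the canonical module of a CM local ring is the injective hull of its residue field, which follows from $H^{\dim R_p}_{\mathfrak{n}}(\omega_{R_p})^{\vee}\cong\Hom_{R_p}(\omega_{R_p},\omega_{R_p})\cong\widehat{R_p}$ after passing to the completion — together with the classical faithfulness of that injective hull. If one prefers to sidestep the identification: since $R_p$ is a domain, $\omega_{R_p}$ is torsion-free of rank one, so any $a\in pR_p$ is a nonzerodivisor on $\omega_{R_p}$ and $\dim(\omega_{R_p}/a\omega_{R_p})=d-1$; the long exact sequence in local cohomology together with Grothendieck vanishing then shows that multiplication by $a$ is surjective on $M_p$, and being simultaneously zero when $a\in\ann_{R_p}M_p$ this forces $M_p=0$, contradicting $M_p\neq 0$, so again $\ann_{R_p}M_p=0$.
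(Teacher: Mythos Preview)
Your proof is correct. Both you and the paper begin identically: define the multiplication map $f$ and reduce to showing $\ann_R H_I^d(\omega_R)=0$. The paper then invokes \cite[Corollary 5.5]{fathi}, which (using that $R$ is a domain so $\omega_R\to(\omega_R)_{(0)}$ is injective) gives the equality $\ann H_I^d(\omega_R)=\ann\omega_R$, and finishes with $\ann\omega_R=0$ from \cite[Proposition 3.3.11.c]{brunsherzog}. You instead localize at a minimal prime $p$ of $I$ with $\height p=d$, identify $M_p\cong E_{R_p}(\kappa(p))$ via \cite[Corollary 3.5.9]{brunsherzog}, prove directly that this injective hull is faithful over $R_p$, and conclude using the injection $R\hookrightarrow R_p$ afforded by the domain hypothesis. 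Your route is more self-contained---it avoids the external citation to Fathi and needs only standard facts already used elsewhere in the paper---while the paper's route is shorter on the page and, via Fathi's result, actually records the stronger identity $\ann H_I^d(\omega_R)=\ann\omega_R$ rather than just the vanishing. Your alternative endgame (surjectivity of multiplication by a nonzerodivisor on top local cohomology) is also fine and likewise sidesteps the explicit identification with the injective hull.
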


\begin{proof}
Consider the map
$$f:R\rightarrow\Hom_{R}(H_{I}^{d}(\omega_{R}),H_{I}^{d}(\omega_{R}))$$
defined by $r\mapsto\phi_{r}:H_{I}^{d}(\omega_{R})\rightarrow H_{I}^{d}(\omega_{R})$ where $\phi_{r}(x)=rx$. To show $f$ is one to one, it is enough to show $\ann{H_{I}^{d}(\omega_{R})}=0$. Note that since $R$ is a domain, the kernel of the natural map $\omega_{R}\rightarrow(\omega_{R})_{0}$ is zero, so \cite[Corollary 5.5]{fathi} gives the first equality below and \cite[Proposition 3.3.11.c]{brunsherzog} gives the second equality below
$$\ann{H_{I}^{d}(\omega_{R})}=\ann{\omega_{R}}=0.$$
\end{proof}

\begin{lemma} \label{lemmatodropthethereisaninjectionassumption} Let $(R,m,k)$ be a complete CM local ring with canonical module $\omega_{R}$ and $n=\dim{R}$. Let $I$ be an ideal with $d=\height{I}$. Assume $R$ is Gorenstein or assume $R$ is a domain. Then $H_{m}^{n-d}(H_{I}^{d}(\omega_{R}))\neq 0$. In particular, we have
$$\idim_{R}H_{I}^{d}(\omega_{R})\geq n-d=\dim{\supp{H_{I}^{d}(\omega_{R})}}.$$
\end{lemma}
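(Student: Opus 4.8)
The plan is to verify, in each of the two cases, the hypothesis of Lemma~\ref{injectionlemmaone}, namely to exhibit a nonzero map
$\omega_R\to\Hom_R(H_I^d(\omega_R),H_I^d(\omega_R))$. Granting this, Lemma~\ref{injectionlemmaone} immediately yields $H_m^{n-d}(H_I^d(\omega_R))\neq 0$ together with the stated lower bound on $\idim_R H_I^d(\omega_R)$, while the equality $n-d=\dim\supp H_I^d(\omega_R)$ is Lemma~\ref{dimsuppequalsnminusd}(2). Note throughout that $H_I^d(\omega_R)\neq 0$ by Lemma~\ref{dimsuppequalsnminusd}(1), so the $\Hom$-module in question is itself nonzero.

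In the Gorenstein case, $\omega_R\cong R$, so it suffices to produce a nonzero map $R\to\Hom_R(M,M)$ with $M=H_I^d(\omega_R)$, and this is precisely Lemma~\ref{injectionlemmatwo} applied to the nonzero module $M$.

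In the domain case, Lemma~\ref{lemmafromfathi} provides an \emph{injection} $R\hookrightarrow\Hom_R(H_I^d(\omega_R),H_I^d(\omega_R))$. Since $R$ is a domain, $\omega_R$ is finitely generated and faithful (indeed $\ann_R\omega_R=0$ by \cite[Proposition 3.3.11.c]{brunsherzog}, as already used in Lemma~\ref{lemmafromfathi}), so Lemma~\ref{gradeandfaithfullemma} gives a nonzero map $\omega_R\to R$. Composing this map with the injection above keeps it nonzero, because an injective homomorphism carries a nonzero submodule to a nonzero submodule; we thus obtain the desired nonzero map $\omega_R\to\Hom_R(H_I^d(\omega_R),H_I^d(\omega_R))$ and may invoke Lemma~\ref{injectionlemmaone}. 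I do not expect a genuine obstacle here; the only point requiring care is that in the domain case one really needs the \emph{injectivity} coming from Lemma~\ref{lemmafromfathi} (and not merely a nonzero map $R\to\Hom$), since otherwise the composite with $\omega_R\to R$ could conceivably vanish.
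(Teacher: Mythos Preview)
Your proof is correct and follows essentially the same approach as the paper: in both cases you verify the hypothesis of Lemma~\ref{injectionlemmaone} by producing a nonzero map $\omega_R\to\Hom_R(H_I^d(\omega_R),H_I^d(\omega_R))$, using Lemma~\ref{injectionlemmatwo} in the Gorenstein case and the composition of Lemma~\ref{gradeandfaithfullemma} with Lemma~\ref{lemmafromfathi} in the domain case. Your explicit remark that the injectivity from Lemma~\ref{lemmafromfathi} is what guarantees the composite stays nonzero is a helpful clarification the paper leaves implicit.
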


\begin{proof}
Note that $H_{I}^{d}(\omega_{R})$ is nonzero by Lemma \ref{dimsuppequalsnminusd}. If $R$ is Gorenstein, then there exists a nonzero map
$$\omega_{R}\cong R\rightarrow \Hom_{R}(H_{I}^{d}(\omega_{R}),H_{I}^{d}(\omega_{R}))$$
by Lemma \ref{injectionlemmatwo}, and so $H_{m}^{n-d}(H_{I}^{d}(\omega_{R}))\neq 0$ by Lemma \ref{injectionlemmaone}.\newline

Now assume $R$ is a domain. Since $\omega_{R}$ is faithful by \cite[Proposition 3.3.11.c]{brunsherzog}, there exists a nonzero map $\omega_{R}\rightarrow R$ by Lemma \ref{gradeandfaithfullemma}. Also, by Lemma \ref{lemmafromfathi} there exists an injection
$$R\rightarrow\Hom_{R}(H_{I}^{d}(\omega_{R}),H_{I}^{d}(\omega_{R})),$$
which gives a nonzero map
$$\omega_{R}\rightarrow R\rightarrow\Hom_{R}(H_{I}^{d}(\omega_{R}),H_{I}^{d}(\omega_{R})),$$
and so $H_{m}^{n-d}(H_{I}^{d}(\omega_{R}))\neq 0$ by Lemma \ref{injectionlemmaone}.
\end{proof}

The following definition is taken from \cite[Definition 3.3.16]{brunsherzog}.
\begin{definition} Let $R$ be a CM ring. A finitely generated module $\omega_{R}$ is a canonical module of $R$ if $(\omega_{R})_{m}$ is a canonical module of $R_{m}$ for all maximal ideals $m$ of $R$.
\end{definition}

Note that since canonical modules localize by \cite[Theorem 3.3.5.b]{brunsherzog}, if $\omega_{R}$ is a canonical module for a CM ring $R$, then $(\omega_{R})_{p}$ is a canonical module for $R_{p}$ for all primes $p$. Also, In \cite[Proposition 3.5]{dorreh}, Dorreh proved that if $R$ is a regular local ring of dimension $n$ containing a field and $I$ is an ideal with $d=\height{I}$, then 
$$\idim_{R}H_{I}^{d}(R)=\dim{\supp{H_{I}^{d}(\omega_{R})}}=n-d.$$ 
\begin{proposition} \label{dorrehgeneralization} Let $R$ be a CM ring with canonical module $\omega_{R}$, let $I$ be an ideal, and let $p\supseteq I$ be a prime ideal with $\height{I}=\height{I_{p}}$. Assume $R$ is Gorenstein or assume $\widehat{R_{p}}$ is a domain. Then $H_{I}^{\height{I}}(\omega_{R})\neq 0$ and
$$\idim_{R}H_{I}^{\height{I}}(\omega_{R})\geq \height{p}-\height{I}.$$
\end{proposition}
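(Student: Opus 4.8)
The plan is to deduce this from Lemma~\ref{lemmatodropthethereisaninjectionassumption} by first localizing at $p$ and then completing. Localizing, $R_p$ is a CM local ring of dimension $\height{p}$, and since canonical modules localize, $(\omega_R)_p$ is a canonical module for $R_p$. Because local cohomology commutes with localization, $\bigl(H^{\height{I}}_I(\omega_R)\bigr)_p\cong H^{\height{I}}_{I_p}\bigl((\omega_R)_p\bigr)$; writing $d=\height{I}=\height{I_p}$, Lemma~\ref{dimsuppequalsnminusd}(1) applied to $R_p$ shows this module is nonzero, which already yields $H^{\height{I}}_I(\omega_R)\neq 0$. Next, set $S=\widehat{R_p}$, a complete CM local ring with maximal ideal $\mathfrak{m}_S$ and $\dim S=\height{p}=:n$. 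The fibre of $R_p\to S$ over $\mathfrak{m}_{R_p}$ is the residue field of $R_p$, which is (trivially) Gorenstein, so by \cite[Theorem 3.3.5]{brunsherzog} the module $\omega_S:=S\otimes_{R_p}(\omega_R)_p$ is a canonical module for $S$; writing $J=IS$, we have $\height{J}=\height{I_p}=d$, since in the CM local rings $R_p$ and $S$ height is the difference of the dimensions of the ring and of the corresponding quotient, and completion changes neither $\dim R_p$ nor $\dim R_p/I_p$. Finally, if $R$ is Gorenstein then so is $S$, and otherwise $S$ is a domain by hypothesis; so in either case Lemma~\ref{lemmatodropthethereisaninjectionassumption} applies to the pair $(S,J)$.

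That lemma gives $H^{\,n-d}_{\mathfrak{m}_S}\bigl(H^{d}_J(\omega_S)\bigr)\neq 0$. I would then pull this back down along the flat map $R_p\to S$ by flat base change. First,
\[
H^{d}_J(\omega_S)\;\cong\;H^{d}_{I_p}\bigl((\omega_R)_p\bigr)\otimes_{R_p}S\;\cong\;\bigl(H^{d}_I(\omega_R)\bigr)_p\otimes_{R_p}S,
\]
the last isomorphism again because local cohomology commutes with localization; writing $N=H^{d}_I(\omega_R)$, a second application of flat base change gives
\[
H^{\,n-d}_{\mathfrak{m}_S}\bigl(N_p\otimes_{R_p}S\bigr)\;\cong\;H^{\,n-d}_{\mathfrak{m}_{R_p}}(N_p)\otimes_{R_p}S.
\]
Since $S$ is faithfully flat over $R_p$, the nonvanishing forces $H^{\,n-d}_{\mathfrak{m}_{R_p}}(N_p)\neq 0$, hence $\idim_{R_p}N_p\geq n-d$ because $H^{j}_{\mathfrak{m}_{R_p}}(-)$ vanishes in degrees exceeding the injective dimension; and therefore $\idim_R H^{d}_I(\omega_R)\geq n-d=\height{p}-\height{I}$, because injective dimension does not increase under localization.

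The only substantive input is Lemma~\ref{lemmatodropthethereisaninjectionassumption}; the rest is bookkeeping, and the main obstacle is just ensuring the hypotheses of that lemma genuinely survive the two base changes. The points that need checking are: that a canonical module is preserved by completion (this is where the Gorenstein-fibre condition in \cite[Theorem 3.3.5]{brunsherzog} enters); that $\height{I_p}$ is unaffected by completing (conveniently verified via Cohen--Macaulayness, writing heights as differences of dimensions); and the standard fact that over a local ring $H^{j}_{\mathfrak{m}}(M)=0$ for $j$ larger than $\idim M$. None of these is hard, but all are needed to close the argument.
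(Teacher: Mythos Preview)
Your proof is correct and follows essentially the same route as the paper's: localize at $p$, pass to the completion $\widehat{R_p}$, verify that the height of $I$ is preserved (via the Cohen--Macaulay dimension formula), apply Lemma~\ref{lemmatodropthethereisaninjectionassumption} there, and descend the nonvanishing of $H^{n-d}_{\mathfrak m}$ along the faithfully flat map to bound $\idim_{R_p}$ and hence $\idim_R$. You have simply made explicit a few steps (preservation of the canonical module under completion, the flat base-change isomorphisms) that the paper leaves implicit in its three enumerated observations.
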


\begin{proof}
Since $\idim_{R}H_{I}^{\height{I}}(\omega_{R})\geq \idim_{R_{p}}H_{I}^{\height{I}}(\omega_{R})_{p}$, since $\height{I}=\height{I_{p}}$, and since $H_{I}^{\height{I}}(\omega_{R})_{p}\cong H_{I_{p}}^{\height{I_{p}}}(\omega_{R_{p}})$, it is enough to show 
$$\idim_{R_{p}}H_{I_{p}}^{\height{I_{p}}}(\omega_{R_{p}})\geq \dim{R_{p}}-\height{I_{p}},$$
but this is immediate from the following three observations.
\begin{enumerate}
\item $\height{I_{p}\widehat{R_{p}}}=\dim{\widehat{R_{p}}}-\dim{\widehat{R_{p}}/I_{p}\widehat{R_{p}}}=\dim{R_{p}}-\dim{R_{p}/I_{p}}=\height{I_{p}}$
\item $\widehat{R_{p}}$ is a faithfully flat extension of $R_{p}$ so an $R_{p}$ module $M$ is nonzero if and only if $M\otimes_{R_{p}}\widehat{R_{p}}$ is nonzero
\item $H_{p_{p}}^{\dim{R_{p}}-\height{I_{p}}}(H_{I_{p}}^{\height{I_{p}}}(\omega_{R_{p}}))\otimes_{R_{p}}\widehat{R_{p}}$ is nonzero by Lemma \ref{lemmatodropthethereisaninjectionassumption}.\qedhere
\end{enumerate}
\end{proof}

\begin{corollary} \label{injectivedimlowerbound} Let $R$ be a CM ring with $n=\dim{R}$ and canonical module $\omega_{R}$ and let $I$ be an ideal with $d=\height{I}$. Assume all the maximal ideals of $R$ have the same height. Assume $R$ is Gorenstein or assume $\widehat{R_{m}}$ is a domain for all maximal ideals $m$. Then $H_{I}^{\height{I}}(\omega_{R})\neq 0$ and $\idim_{R}H_{I}^{\height{I}}(\omega_{R})\geq n-d$. In particular, if $n-d>0$, then $H_{I}^{d}(\omega_{R})$ is not injective.
\end{corollary}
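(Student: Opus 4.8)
The plan is to deduce this immediately from Proposition~\ref{dorrehgeneralization} by choosing an appropriate maximal ideal to play the role of $p$. First I would fix a minimal prime $p$ of $I$ with $\height{p}=d$; such a prime exists because $d=\height{I}=\min\{\height{p'}: p' \text{ minimal over } I\}$. Then choose any maximal ideal $m$ of $R$ containing $p$. Since all maximal ideals of $R$ have the same height, that common height is forced to be $n=\dim{R}$, so $\height{m}=n$.

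The substantive step is to verify the hypothesis $\height{I}=\height{I_{m}}$ of Proposition~\ref{dorrehgeneralization} for this choice. For any prime $q\subseteq m$ we have $(R_{m})_{qR_{m}}\cong R_{q}$, hence $\height{qR_{m}}=\height{q}$; and the minimal primes of $I_{m}$ are exactly the ideals $qR_{m}$ with $q$ minimal over $I$ and $q\subseteq m$. Taking the minimum of $\height{qR_{m}}=\height{q}$ over this (possibly proper) subset of the minimal primes of $I$ gives $\height{I_{m}}\geq\height{I}=d$, while the single minimal prime $pR_{m}$ of $I_{m}$ satisfies $\height{pR_{m}}=\height{p}=d$, giving $\height{I_{m}}\leq d$. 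Thus $\height{I_{m}}=d=\height{I}$, and of course $m\supseteq p\supseteq I$.

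Now I would apply Proposition~\ref{dorrehgeneralization} with $p:=m$: by hypothesis $R$ is Gorenstein or $\widehat{R_{m}}$ is a domain, so the proposition yields $H_{I}^{\height{I}}(\omega_{R})\neq 0$ and $\idim_{R}H_{I}^{\height{I}}(\omega_{R})\geq\height{m}-\height{I}=n-d$. For the final sentence, if $n-d>0$ then $H_{I}^{d}(\omega_{R})$ is a nonzero module with $\idim_{R}H_{I}^{d}(\omega_{R})\geq n-d>0$, and a module of positive injective dimension cannot be injective.

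The only place that requires any care is the height computation $\height{I_{m}}=d$, i.e.\ checking that passing to $R_{m}$ does not change the height of $I$; once that is in hand, the statement is a direct specialization of Proposition~\ref{dorrehgeneralization}, so I do not anticipate a real obstacle.
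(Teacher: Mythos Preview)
Your proposal is correct and follows essentially the same route as the paper: pick a minimal prime $p$ of $I$ of height $d$, a maximal ideal $m\supseteq p$ (necessarily of height $n$), check $\height{I_m}=d$, and invoke Proposition~\ref{dorrehgeneralization}. Your verification of $\height{I_m}=d$ via the correspondence of minimal primes is slightly more detailed than the paper's terse chain argument, but the content is the same.
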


\begin{proof}
Choose a prime ideal $p$ so that $\height{p}=d$ and choose a maximal ideal $m$ containing $p$. Then $\height{I}=\height{I_{m}}$ since if $p_{0}\subseteq\dots\subseteq p_{d}=p\subseteq\dots\subseteq p_{n}=m$ is a chain in $R$, then it is a chain in $R_{m}$. Therefore
$$\idim_{R}H_{I}^{\height{I}}(\omega_{R})\geq \height{m}-\height{I}=n-d$$
where the inequality is by Proposition \ref{dorrehgeneralization}.
\end{proof}

\section{Main Theorem} \label{maintheorem}
The goal of this section is to prove the main theorem. We do this by making a comparison between the minimal injective resolution of $H_{I}^{d}(\omega_{R})$ and the dualizing complex. The majority of the work goes into proving Lemma \ref{firsttwobassnumbersanddplusonelocalcoho}.

\begin{remark} \label{dimrminushtigreaterthantwojustification} Let $(R,m,k)$ be a CM local domain with canonical module $\omega_{R}$ and $n=\dim{R}$ and let $I$ be an ideal with $d=\height{I}$. Assume $R$ is Gorenstein or assume $\widehat{R}$ is a domain. If $n-d\leq 1$, then $\idim_{R}H_{I}^{d}(\omega_{R})=n-d$ and $H_{I}^{d}(\omega_{R})$ is the only nonzero local cohomology module. If $n-d=2$, $H_{I}^{i}(\omega_{R})=0$ for $i\neq d,d+1$.
\end{remark}

\begin{proof}
If $n-d=0$, then $\sqrt{I}=m$, so $H_{I}^{d}(\omega_{R})\cong E_{R}(k)$ by \cite[Corollary 3.5.9]{brunsherzog}. If $n-d=1$, then $\height{I}=d=n-1<\dim{R}$. Since $\sqrt{0+I}\neq m$, \cite[Theorem 1.1]{bagheriyeh} gives $H_{I}^{n}(R)=0$. Since the cohomological dimension of $M$ is less than or equal to the cohomological dimension of $R$ for any module $M$, we have $H_{I}^{n}(\omega_{R})=0$. Therefore applying $\Gamma_{I}$ to the dualizing complex of $R$, we get an exact sequence
$$0\rightarrow H_{I}^{n}(\omega_{R})\rightarrow \bigoplus_{p\supseteq I, \height{p}=d}E_{R}(R/p)\rightarrow E_{R}(k)\rightarrow 0,$$
and so $\idim_{R}H_{I}^{n}(\omega_{R})\leq 1=n-d$. Also, $\idim_{R}H_{I}^{n}(\omega_{R})\geq n-d=1$ by Corollary \ref{injectivedimlowerbound}, so $\idim_{R}H_{I}^{n}(\omega_{R})=1=n-d$.\newline

Again using \cite[Theorem 1.1]{bagheriyeh}, if $n-d=2$, since $\sqrt{0+I}\neq m$, $H_{I}^{d+2}(R)=0$. This tells us that $H_{I}^{i}(\omega_{R})=0$ for $i<d$ and $i>d+1$.
\end{proof}

\begin{lemma} \label{dualizingcomplexcomputationlemma} Let $R$ be a CM local ring with canonical module $\omega_{R}$ and $n=\dim{R}$ and let $I$ be an ideal with $d=\height{I}$. If $E$ is the minimal injective resolution of $H_{I}^{d}(\omega_{R})$, then the following hold.
\begin{enumerate}
\item $\ass{E^{0}}\subseteq\{q \ | \ q\supseteq I,\height{q}=d\}$
\item $\ass{E^{1}}\subseteq\{q \ | \ q\supseteq I,\height{q}=d+1\}$
\end{enumerate}
\end{lemma}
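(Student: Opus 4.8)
The plan is to compare the minimal injective resolution $E^\bullet$ of $H_I^d(\omega_R)$ with the complex $\Gamma_I(D)$, where $D$ is the dualizing complex of $R$. By Lemma~\ref{dualizingcomlexinjrescanmod}, $D$ is an injective resolution of $\omega_R$, so $H^j(\Gamma_I(D))=H_I^j(\omega_R)$ for all $j$; and exactly as in the proof of Lemma~\ref{dimsuppequalsnminusd}, Lemma~\ref{noitorsion} forces $\Gamma_I(D)$ to have the shape $0\to C^d\to C^{d+1}\to\cdots$ with $C^j=\bigoplus_{p\supseteq I,\ \height p=j}E_R(R/p)$; in particular $\ass C^j=\{q\mid q\supseteq I,\ \height q=j\}$ (an empty set being allowed) and $C^{d-1}=0$. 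Since $C^{d-1}=0$ we get $H_I^d(\omega_R)=\ker(C^d\to C^{d+1})$, that is, a two-term exact sequence $0\to H_I^d(\omega_R)\to C^d\xrightarrow{\partial}C^{d+1}$ of injective modules, which is what I will feed into the comparison.

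The comparison rests on the following elementary fact about minimal injective resolutions: if $0\to M\to J^0\xrightarrow{\partial}J^1$ is exact with $J^0,J^1$ injective, then the minimal injective resolution $0\to M\to E^0\to E^1\to\cdots$ of $M$ has $E^0$ isomorphic to a direct summand of $J^0$ and $E^1$ isomorphic to a direct summand of $J^1$. Indeed $E^0=E_R(M)$, and extending $M\hookrightarrow J^0$ along the essential extension $M\hookrightarrow E^0$ produces an injection $E^0\hookrightarrow J^0$ (injective because $M$ is essential in $E^0$), which splits since $E^0$ is injective; viewing $E^0\subseteq J^0$ we have $\ker(\partial|_{E^0})=E^0\cap\ker\partial=M$, so $\partial$ induces an injection $E^0/M\hookrightarrow J^1$, and since $E^1=E_R(E^0/M)$ this embedding extends to a split injection $E^1\hookrightarrow J^1$. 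Applying this with $M=H_I^d(\omega_R)$, $J^0=C^d$, $J^1=C^{d+1}$ gives $\ass E^0\subseteq\ass C^d$ and $\ass E^1\subseteq\ass C^{d+1}$, which are precisely the two claimed inclusions.

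I do not expect a real obstacle here; the argument is a bookkeeping exercise once the setup is in place. The one point to handle with care is making sure that, after embedding $E^0$ into $C^d$, the submodule $\ker(\partial|_{E^0})$ is exactly $H_I^d(\omega_R)$ rather than something larger — this is where I use that the chosen map $E^0\to C^d$ restricts to the given inclusion on $H_I^d(\omega_R)$ and is injective. A secondary point is justifying the shape of $\Gamma_I(D)$: one only needs that $\Gamma_I(E_R(R/p))=0$ when $I\not\subseteq p$ (which follows from the proof of Lemma~\ref{noitorsion} by choosing $x\in I\setminus p$, which acts invertibly on $E_R(R/p)$) and that $\Gamma_I(E_R(R/p))$, being a submodule of $E_R(R/p)$, has $\ass\subseteq\{p\}$ otherwise; this already pins down $\ass C^d$ and $\ass C^{d+1}$, so the exact identification of the modules $C^j$ is not strictly needed.
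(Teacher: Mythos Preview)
Your proof is correct and follows essentially the same approach as the paper: both embed $E^0$ into $\Gamma_I(D^d)$ via the essential extension property, then show the cokernel $E^0/H_I^d(\omega_R)$ injects into $\Gamma_I(D^{d+1})$ (you do this directly via $\partial|_{E^0}$, the paper does it via the snake lemma on cokernels, but these are the same computation). Your packaging of the comparison as a reusable ``elementary fact'' about minimal injective resolutions versus arbitrary two-term injective resolutions is a slight presentational improvement, but the underlying argument is identical.
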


\begin{proof}
If $D$ is the dualizing complex, then
$$\Gamma_{I}(D)=0\rightarrow\bigoplus_{p\supseteq I,\height{p}=d}E_{R}(R/p)\xrightarrow{f}\bigoplus_{p\supseteq I,\height{p}=d+1}E_{R}(R/p)\rightarrow\dots$$
and
$$H_{I}^{\height{I}}(\omega_{R})=H^{\height{I}}(\Gamma_{I}(D))=\text{ker}f\subseteq \bigoplus_{p\supseteq I,\height{p}=d}E_{R}(R/p).$$
We now have a diagram with exact rows

\[\begin{tikzcd}
	0 & {H_{I}^{d}(\omega_{R})} & {E^{0}} & {\text{coker}(i)} & 0 \\
	\\
	\\
	0 & {H_{I}^{d}(\omega_{R})} & {\bigoplus E_{R}(R/p)} & {\text{coker}(i')} & 0
	\arrow[from=4-1, to=4-2]
	\arrow[from=4-2, to=4-3]
	\arrow[from=4-3, to=4-4]
	\arrow[from=4-4, to=4-5]
	\arrow[from=1-1, to=1-2]
	\arrow[from=1-2, to=1-3]
	\arrow[from=1-3, to=1-4]
	\arrow[from=1-4, to=1-5]
	\arrow[from=1-2, to=4-2]
	\arrow[dashed, from=1-3, to=4-3]
	\arrow[dashed, from=1-4, to=4-4]
\end{tikzcd}\]
where $i$ and $i'$ are the inclusions and the left vertical map is the identity. Since $\oplus E_{R}(R/p)$ is injective, we get the middle vertical map. Since $E^{0}$ is an essential extension of $H_{I}^{d}(\omega_{R})$, the middle vertical map is one to one, and the middle vertical map induces the right vertical map. Now the snake lemma tells us that the right vertical map is one to one. Therefore
$$\text{coker}(i)\subseteq \text{coker}(i')\subseteq D^{d+1},$$
so $\ass\text{coker}(i)\subseteq \ass{D^{d+1}}$. Since $E^{1}$ is the injective hull of $\text{coker}(i)$, $\ass{E^{1}}=\ass\text{coker}(i)$. This finishes the proof.
\end{proof}

\begin{lemma} \label{nminusdcompositionisnonzero} Let $(R,m,k)$ be a CM local ring with canonical module $\omega_{R}$ and $n=\dim{R}$ and let $I$ be an ideal with $d=\height{I}$. Assume $R$ is Gorenstein or assume $\widehat{R}$ is a domain. Then $H_{m}^{n-d}(H_{I}^{d}(\omega_{R}))\neq 0$.
\end{lemma}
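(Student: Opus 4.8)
The plan is to reduce to the complete case: the statement with $R$ complete is exactly Lemma \ref{lemmatodropthethereisaninjectionassumption}, so it suffices to base change along the faithfully flat map $R\to\widehat{R}$ and to check that every hypothesis and every module in sight transforms correctly.

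First I would set $S:=\widehat{R}$ and record the standard facts: $S$ is a complete CM local ring with $\dim S=n$ and maximal ideal $mS$, the module $\omega_S:=\omega_R\otimes_R S$ is a canonical module for $S$ (by the behavior of canonical modules under completion, \cite[Theorem 3.3.5]{brunsherzog}), $S$ is Gorenstein whenever $R$ is, and in the remaining case $S$ is a domain by hypothesis; hence $S$ satisfies the hypotheses of Lemma \ref{lemmatodropthethereisaninjectionassumption}. I would also note $\height{IS}=d$: since $S/IS\cong\widehat{R/I}$ we have $\dim{S/IS}=\dim{R/I}$, and $R$ and $S$ are CM of the same dimension $n$, so $\height{IS}=\dim S-\dim{S/IS}=\dim R-\dim{R/I}=\height{I}=d$ (this is the same computation as observation (1) in the proof of Proposition \ref{dorrehgeneralization}).

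Next I would apply flat base change for local cohomology twice. Since $S$ is flat over $R$, first $H_{IS}^{d}(\omega_S)=H_{IS}^{d}(\omega_R\otimes_R S)\cong H_I^{d}(\omega_R)\otimes_R S$; then, applying the same principle to the $R$-module $N:=H_I^{d}(\omega_R)$ with respect to the ideal $m$,
$$H_{mS}^{n-d}(H_{IS}^{d}(\omega_S))\cong H_{mS}^{n-d}(N\otimes_R S)\cong H_m^{n-d}(N)\otimes_R S=H_m^{n-d}(H_I^{d}(\omega_R))\otimes_R S.$$
By Lemma \ref{lemmatodropthethereisaninjectionassumption} applied to the ring $S$ with the ideal $IS$ (using $\dim S=n$ and $\height{IS}=d$), the left-hand side is nonzero, hence $H_m^{n-d}(H_I^{d}(\omega_R))\otimes_R S\neq 0$, and since $R\to S$ is faithfully flat this forces $H_m^{n-d}(H_I^{d}(\omega_R))\neq 0$. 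The only points requiring care are the standard transfer of the canonical-module and Gorenstein/domain properties under completion and the bookkeeping in the base-change isomorphisms; I do not expect a genuine obstacle, since this lemma is essentially just the non-complete repackaging of Lemma \ref{lemmatodropthethereisaninjectionassumption}.
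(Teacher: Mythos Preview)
Your proof is correct and follows exactly the same approach as the paper: pass to the completion via faithful flatness, use flat base change for local cohomology to identify $H_{m}^{n-d}(H_{I}^{d}(\omega_{R}))\otimes_{R}\widehat{R}$ with $H_{\widehat{m}}^{n-d}(H_{\widehat{I}}^{d}(\omega_{\widehat{R}}))$, and invoke Lemma~\ref{lemmatodropthethereisaninjectionassumption}. The paper's proof is a one-line version of yours; the extra bookkeeping you supply (transfer of the canonical module, Gorenstein/domain hypotheses, and the height computation $\height{IS}=d$) is exactly what is implicit in the paper's displayed isomorphism.
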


\begin{proof}
Note that $\widehat{R}$ is faithfully flat and
$$H_{m}^{n-d}(H_{I}^{d}(\omega_{R}))\otimes_{R}\widehat{R}\cong H_{\widehat{m}}^{n-d}(H_{\widehat{I}}^{d}(\omega_{\widehat{R}}))\neq 0$$
by Lemma \ref{lemmatodropthethereisaninjectionassumption}.
\end{proof}

The following is contained in \cite[Theorem 6.1 and Theorem 6.2]{sri}.
\begin{theorem}[Iyengar] Let $R$ be a Noetherian ring, let $I$ be an ideal, and let $M$ be a module. Then
$$\depth(I,M)=\inf\{i|\ext_{R}^{i}(R/I,M)\neq 0\}=\inf\{i|H_{I}^{i}(M)\neq 0\}.$$
\end{theorem}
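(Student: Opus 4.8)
The plan is to read all three quantities off Koszul and \v{C}ech complexes. Fix a generating set $I=(x_{1},\dots,x_{n})$ and, for $t\geq 1$, write $K^{\bullet}(\underline{x}^{t};M)$ for the Koszul cochain complex on $x_{1}^{t},\dots,x_{n}^{t}$ with coefficients in $M$. I would use two standard facts, neither of which requires $M$ to be finitely generated: the stable Koszul complex computes local cohomology, so $H_{I}^{i}(M)\cong\varinjlim_{t}H^{i}\bigl(K^{\bullet}(\underline{x}^{t};M)\bigr)\cong\varinjlim_{t}\ext_{R}^{i}(R/I^{t},M)$; and comparing $K_{\bullet}(\underline{x};R)$ with a projective resolution of $R/I$ and applying $\Hom_{R}(-,M)$ identifies the smallest cohomological degree in which either is nonzero, so that $\inf\{i:H^{i}(K^{\bullet}(\underline{x};M))\neq 0\}=\inf\{i:\ext_{R}^{i}(R/I,M)\neq 0\}$. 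The first of these expressions is the Koszul description of $\depth(I,M)$ --- the robust definition for a general module --- so it remains to prove $g:=\inf\{i:\ext_{R}^{i}(R/I,M)\neq 0\}=\inf\{i:H_{I}^{i}(M)\neq 0\}$.

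The heart of the argument is a d\'evissage: for every finitely generated $N$ with $\supp N\subseteq V(I)$ one has $\ext_{R}^{i}(N,M)=0$ for all $i<g$. I would prove this by induction on the cohomological degree, using the exact sequences $0\to p/I\to R/I\to R/p\to 0$ to reduce a prime $p\supseteq I$ to the case $R/I$, and then peeling off a prime filtration of $N$. Granting this, $H_{I}^{i}(M)=0$ for $i<g$ follows by applying the d\'evissage to $N=R/I^{t}$ and taking the colimit. For the nonvanishing $H_{I}^{g}(M)\neq 0$, I would apply $\Hom_{R}(-,M)$ to $0\to I^{t}/I^{t+1}\to R/I^{t+1}\to R/I^{t}\to 0$: since $I^{t}/I^{t+1}$ is finitely generated and supported on $V(I)$, the d\'evissage gives $\ext_{R}^{g-1}(I^{t}/I^{t+1},M)=0$, so the transition maps $\ext_{R}^{g}(R/I^{t},M)\to\ext_{R}^{g}(R/I^{t+1},M)$ are injective and the colimit $H_{I}^{g}(M)$ therefore contains the nonzero module $\ext_{R}^{g}(R/I,M)$.

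The hard part will be making the d\'evissage rigorous in the absence of finite generation of $M$: one cannot appeal to prime avoidance on $\ass M$ or to the classical Rees theorem, so the reduction from $R/I$ to an arbitrary finitely generated module supported on $V(I)$ has to be carried through by the degree induction sketched above, keeping the cohomological bookkeeping straight so as not to argue circularly. All of this is done in \cite[Theorems 6.1 and 6.2]{sri}, which we quote.
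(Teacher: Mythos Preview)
The paper does not give its own proof of this statement: it simply attributes the result to Iyengar and cites \cite[Theorem 6.1 and Theorem 6.2]{sri}. Your proposal ends by citing exactly the same reference, so at the level of what the paper actually does you are in agreement.

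That said, the sketch you offer before the citation is a reasonable outline of the Iyengar argument, with one soft spot. Your d\'evissage step claims that $\ext_{R}^{i}(N,M)=0$ for all $i<g$ and all finitely generated $N$ supported on $V(I)$, and you propose to reduce from $R/p$ (with $p\supseteq I$) to $R/I$ via the sequence $0\to p/I\to R/I\to R/p\to 0$. But this sequence only helps if you already know vanishing for the finitely generated $I$-torsion module $p/I$, which is exactly the kind of module you are trying to handle; the induction has to be set up more carefully (e.g.\ by simultaneous induction on $i$ and on the length of a prime filtration, using that each successive quotient is $R/p_j$ with $p_j\supseteq I$, and bootstrapping from the base case $N=R/I$ through the long exact sequence). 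Since you ultimately defer to \cite{sri} anyway, this is not fatal, but as written the d\'evissage paragraph is circular.
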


Recall that if $M$ is a finitely generated CM module over a Noetherian local ring $R$, then $\depth(p,M)=\depth_{R_{p}}M_{p}$ for all $p$ in the support of $M$.
\begin{lemma} \label{whenislocalcohocm} Let $(R,m,k)$ be a CM local ring with canonical module $\omega_{R}$ and $n=\dim{R}$, let $I$ be an ideal with $d=\height{I}$, and assume $\height{p}=d$ for every minimal prime $p$ of $I$. Assume $R$ is Gorenstein or assume $\widehat{R}$ is a domain. Then the following hold.
\begin{enumerate}
\item For every prime $q\supseteq I$ with $\height{q}-d\leq 2$, we have
$$\height{q}-d =\depth_{R_{q}}H_{I}^{d}(\omega_{R})_{q}=\depth(q,H_{I}^{d}(\omega_{R})).$$
\item If $E$ is a minimal injective resolution of $H_{I}^{d}(\omega_{R})$, then 
\begin{enumerate}
\item $\ass{E^{0}}=\{q \ | \ q\supseteq I,\height{q}=d\}$
\item $\ass{E^{1}}=\{q \ | \ q\supseteq I,\height{q}=d+1\}$
\item $\ass{E^{2}}\supseteq\{q \ | \ q\supseteq I,\height{q}=d+2\}$
\item If $R$ is a domain and if $q$ is in $\ass{E^{i}}$ for some $i\geq 2$, then $\height{q}\geq d+2$.
\end{enumerate}
\end{enumerate}
\end{lemma}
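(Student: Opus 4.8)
The statement is local at $q$, and the plan is to reduce it to a single CM local ring and then read everything off the dualizing complex together with the lower bounds established earlier in the paper. For any prime $q$ one has $H_{I}^{d}(\omega_{R})_{q}\cong H_{I_{q}}^{d}(\omega_{R_{q}})$, since local cohomology commutes with localization and canonical modules localize; and because $R$ is catenary with every minimal prime of $I$ of height $d$, whenever $q\supseteq I$ one gets $d=\height{I_{q}}\leq\height{q}$. Abbreviating $(S,\mathfrak{q})=(R_{q},qR_{q})$, $J=I_{q}$ and $M=H_{J}^{d}(\omega_{S})$, the lemma becomes a statement about $M$ over the CM local ring $S$, where $\omega_{S}=(\omega_{R})_{q}$ is a canonical module, $\dim{S}=\height{q}$, and $\dim{S/J}=\dim{S}-d$ because $S$ is CM. I will take for granted that the hypothesis ``$R$ Gorenstein or $\widehat{R}$ a domain'' (and, in part (2d), ``$R$ a domain'') descends to $S$ — this is immediate when $R$ is Gorenstein, and is the one point where the other case needs a word of care, discussed at the end.

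For part (1), fix $q\supseteq I$ with $\height{q}-d\leq 2$. By the Iyengar Theorem, $\depth_{R_{q}}H_{I}^{d}(\omega_{R})_{q}=\depth_{S}(\mathfrak{q},M)=\inf\{i:H_{\mathfrak{q}}^{i}(M)\neq 0\}$, which the lemma also identifies with $\depth(q,H_{I}^{d}(\omega_{R}))$; so it suffices to prove $H_{\mathfrak{q}}^{i}(M)=0$ for $i<\dim{S}-d$ together with $H_{\mathfrak{q}}^{\dim{S}-d}(M)\neq 0$. The nonvanishing is exactly Lemma~\ref{nminusdcompositionisnonzero} applied to $S$ (which also records $M\neq 0$), so I may assume $\dim{S}-d\geq 1$ for the vanishing. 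As in the proof of Lemma~\ref{dualizingcomplexcomputationlemma}, writing $D_{S}$ for the dualizing complex of $S$, Lemma~\ref{noitorsion} gives
\[
\Gamma_{J}(D_{S})=0\to\bigoplus_{p\supseteq J,\ \height{p}=d}E_{S}(S/p)\xrightarrow{\ f\ }\bigoplus_{p\supseteq J,\ \height{p}=d+1}E_{S}(S/p)\to\cdots,\qquad M=\ker f.
\]
Since $\dim{S}-d\geq 1$, each prime $p$ in the degree-$d$ term has $\height{p}=d<\dim{S}$, so $p\subsetneq\mathfrak{q}$ and $\Gamma_{\mathfrak{q}}(E_{S}(S/p))=0$ by Lemma~\ref{noitorsion}; hence $\Gamma_{\mathfrak{q}}$ annihilates the injective module $(\Gamma_{J}D_{S})^{d}$, and $M\hookrightarrow(\Gamma_{J}D_{S})^{d}$ forces $\Gamma_{\mathfrak{q}}(M)=0$. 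If moreover $\dim{S}-d=2$, then likewise $\Gamma_{\mathfrak{q}}$ annihilates $(\Gamma_{J}D_{S})^{d+1}$ (its primes have height $d+1<\dim{S}=d+2$); letting $C\subseteq(\Gamma_{J}D_{S})^{d+1}$ be the image of $f$ and applying $H_{\mathfrak{q}}^{\bullet}$ to $0\to M\to(\Gamma_{J}D_{S})^{d}\to C\to 0$, injectivity of $(\Gamma_{J}D_{S})^{d}$ gives $H_{\mathfrak{q}}^{1}(M)\cong\Gamma_{\mathfrak{q}}(C)\hookrightarrow\Gamma_{\mathfrak{q}}\big((\Gamma_{J}D_{S})^{d+1}\big)=0$. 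This proves part (1).

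Parts (2a)--(2c) then follow quickly. Lemma~\ref{dualizingcomplexcomputationlemma} already gives $\ass{E^{0}}\subseteq\{q\mid q\supseteq I,\ \height{q}=d\}$ and $\ass{E^{1}}\subseteq\{q\mid q\supseteq I,\ \height{q}=d+1\}$; for the reverse inclusions and for (2c), write $\kappa(q)=S/\mathfrak{q}$ and use that $\mu^{i}(q,H_{I}^{d}(\omega_{R}))=\dim_{\kappa(q)}\ext_{S}^{i}(\kappa(q),M)$ and that the least $i$ with this nonzero equals $\depth_{S}(\mathfrak{q},M)$, which by part (1) is $\height{q}-d$ whenever $q\supseteq I$ and $\height{q}-d\leq 2$; taking $\height{q}$ equal to $d$, $d+1$, $d+2$ places $q$ in $\ass{E^{0}}$, $\ass{E^{1}}$, $\ass{E^{2}}$ respectively. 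For (2d), now $R$ and hence $S$ are domains: if $q\in\ass{E^{i}}$ for some $i\geq 2$ then $q\in\supp{H_{I}^{d}(\omega_{R})}\subseteq V(I)$, so $q\supseteq I$ and $\height{q}\geq d$; were $\height{q}\leq d+1$ we would have $\dim{S}-d\leq 1$, so Remark~\ref{dimrminushtigreaterthantwojustification} applied to $S$ would give $\idim_{S}M\leq 1$, forcing $\ext_{S}^{i}(\kappa(q),M)=0$ and $\mu^{i}(q,H_{I}^{d}(\omega_{R}))=0$ for all $i\geq 2$, contradicting $q\in\ass{E^{i}}$; hence $\height{q}\geq d+2$.

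I expect the hard part to be entirely imported rather than new: the only substantive ingredient is the nonvanishing $H_{\mathfrak{q}}^{\dim{S}-d}(M)\neq 0$, which rests on Lemma~\ref{nminusdcompositionisnonzero} and, underneath that, on the Matlis-duality computation of Lemma~\ref{lemmatodropthethereisaninjectionassumption}; the remainder is bookkeeping with $\Gamma_{J}$ of the dualizing complex and with the definition of the Bass numbers. The one genuine subtlety is that Lemma~\ref{nminusdcompositionisnonzero} and Remark~\ref{dimrminushtigreaterthantwojustification} are invoked over $R_{q}$ rather than $R$, so one must confirm that ``Gorenstein, or domain completion'' survives localization — harmless when $R$ is Gorenstein, and to be addressed (or assumed) in the other case.
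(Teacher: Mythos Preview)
Your approach is essentially the paper's: both use Lemma~\ref{dualizingcomplexcomputationlemma} for the inclusions $\ass{E^{i}}\subseteq\{q\supseteq I:\height{q}=d+i\}$ ($i=0,1$), invoke Lemma~\ref{nminusdcompositionisnonzero} over the localization for the nonvanishing $H_{qR_{q}}^{\height{q}-d}(M_{q})\neq 0$, and appeal to Remark~\ref{dimrminushtigreaterthantwojustification} for (2d). The only technical variation is in the depth \emph{lower} bound: the paper bootstraps (once $\ass{E^{0}}$ is known, $\Hom_{R}(R/q,E^{0})=0$ for $\height{q}>d$, forcing $\depth(q,M)\geq 1$; then repeat at the next level), whereas you establish $H_{\mathfrak{q}}^{i}(M)=0$ for $i<\dim{S}-d$ directly from the short exact sequence $0\to M\to(\Gamma_{J}D_{S})^{d}\to C\to 0$ and the vanishing of $\Gamma_{\mathfrak{q}}$ on injective hulls of primes of smaller height. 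Both are valid and close in spirit.

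There is, however, a gap in part~(1). Your sentence ``which the lemma also identifies with $\depth(q,H_{I}^{d}(\omega_{R}))$'' is not justified: your computation lives entirely over $S=R_{q}$ and yields only $\depth_{R_{q}}M_{q}=\height{q}-d$. The over-$R$ quantity $\depth(q,H_{I}^{d}(\omega_{R}))=\inf\{i:\ext_{R}^{i}(R/q,H_{I}^{d}(\omega_{R}))\neq 0\}$ is a priori only bounded above by $\depth_{R_{q}}M_{q}$ (this is the inequality from \cite[Propositions~5.2.2--5.2.3]{sri} that the paper uses), and you never supply the reverse inequality. The paper closes this by its bootstrap: once (2a) is proved, every $p\in\ass{E^{0}}$ has $\height{p}=d<\height{q}$, so $p\not\supseteq q$ and $\Hom_{R}(R/q,E^{0})=0$, whence $\depth(q,M)\geq 1$; then the squeeze $1\leq\depth(q,M)\leq\depth_{R_{q}}M_{q}\leq 1$ finishes, and similarly at level~$2$. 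Your own argument could also be lifted: running the same vanishing computation over $R$ (with $\Gamma_{q}$ and $\Gamma_{I}(D)$ in place of $\Gamma_{\mathfrak{q}}$ and $\Gamma_{J}(D_{S})$) gives $H_{q}^{i}(H_{I}^{d}(\omega_{R}))=0$ for $i<\height{q}-d$ and hence $\depth(q,M)\geq\height{q}-d$ by the Iyengar theorem. Either fix is short, but one of them is needed. Note that your (2a)--(2d) are unaffected, since they only use the over-$R_{q}$ depth via Bass numbers.

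Your caveat about localizing the hypothesis ``$\widehat{R}$ a domain'' is well placed: applying Lemma~\ref{nminusdcompositionisnonzero} to $R_{q}$ needs $\widehat{R_{q}}$ to be a domain, which does not follow from $\widehat{R}$ being one. The paper makes the same move without comment, so you are in no worse shape; in the Gorenstein case (and in the main application to regular rings) the issue disappears.
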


\begin{proof}
First note that since $\height{p}=d$ for every minimal prime $p$ of $I$, if $q$ is a prime containing $I$, then there exists a chain of primes $p_{0}\subsetneq\dots\subsetneq p_{\height{I}}=p\subseteq q$ for some minimal prime $p$ of $I$. Since this chain is also a chain in $R_{q}$, $\height{I_{q}}=\height{I}$, so
$$H_{I}^{d}(\omega_{R})_{q}\cong H_{I_{q}}^{\height{I_{q}}}(\omega_{R_{q}})\neq 0$$
by Lemma \ref{dimsuppequalsnminusd}.\newline

Now let $M=H_{I}^{d}(\omega_{R})$. If $\height{q}=\height{I}$, then
$$M_{q}\cong H_{I_{q}}^{\height{q}}(\omega_{R_{q}})\cong E_{R_{q}}(R_{q}/qR_{q})\neq 0$$
where the second isomorphism is by \cite[Corollary 3.5.9]{brunsherzog}, so $q$ is a minimal prime in $\supp{M}$, telling us that $q$ is in $\ass{M}$. Therefore there exist injections $R/q\rightarrow M$ and $R_{q}/qR_{q}\rightarrow M_{q}$, so $\Hom_{R}(R/q,M)\neq 0$ and $\Hom_{R_{q}}(R_{q}/qR_{q},M_{q})\neq 0$ and $\depth(q,M)=\depth_{R_{q}}M_{q}=0$. This shows $\ass{E^{0}}\supseteq \{q|\height{q}=\height{I}\}$, and Lemma \ref{dualizingcomplexcomputationlemma} shows $\ass{E^{0}}\subseteq \{q|\height{q}=\height{I}\}$.\newline

Now assume $\height{q}-\height{I}=1$. Since $\ass{E^{0}}=\{q|\height{q}=\height{I}\}$, $\Hom_{R}(R/q,E^{0})=0$, so $\depth(q,M)>0$. Also by Lemma \ref{nminusdcompositionisnonzero} $H_{q_{q}}^{\height{q}-\height{I}}(M_{q})\neq 0$, so $\depth_{R_{q}}M_{q}\leq \height{q}-\height{I}$. This gives
$$\height{q}-\height{I}=1\leq \depth(q,M)\leq \depth(q_{q},M_{q})\leq \depth_{R_{q}}M_{q}\leq \height{q}-\height{I}$$
where the second inequality is by \cite[Proposition 5.2.2]{sri} and the third inequality is by \cite[Proposition 5.2.3]{sri}. This shows $\ass{E^{1}}\supseteq \{q|\height{q}=\height{I}+1\}$, and Lemma \ref{dualizingcomplexcomputationlemma} shows $\ass{E^{1}}\subseteq \{q|\height{q}=\height{I}+1\}$.\newline

Finally assume $\height{q}-\height{I}=2$. Since $\ass{E^{0}}=\{q \ | \ q\supseteq I,\height{q}=\height{I}\}$ and since $\ass{E^{1}}=\{q|\height{q}=\height{I}+1\}$, $\Hom_{R}(R/q,E^{i})=0$ for $i=0,1$, so $\depth(q,M)>1$. Also, by Lemma \ref{nminusdcompositionisnonzero} $H_{q_{q}}^{\height{q}-\height{I}}(M_{q})\neq 0$, so $\depth_{R_{q}}M_{q}\leq \height{q}-\height{I}$. Therefore 
$$\height{q}-\height{I}=2\leq \depth(q,M)\leq \depth(q_{q},M_{q})\leq \depth_{R_{q}}M_{q}\leq \height{q}-\height{I},$$
and so $\ass{E^{2}}\supseteq\{q|\height{q}=\height{I}+2\}$.\newline

To prove the last assertion, assume $R$ is a domain and suppose $q$ is a prime containing $I$ with $\height{q}-\height{I}\leq 1$. Then $\idim_{R_{q}}M_{q}=\height{q}-\height{I}$ by Remark \ref{dimrminushtigreaterthantwojustification}, so $\ext_{R_{q}}^{i}(k(q),M_{q})=0$ for $i>\height{q}-\height{I}$, which is equivalent to $\mu^{i}(q,M)=0$ for $i>\height{q}-\height{I}$. This tells us that $E_{R}(R/q)$ is not a summand of $E^{i}$ for $i>\height{q}-\height{I}$. Therefore, if $q'$ is in $\ass{E^{i}}$ for $i\geq 2$, we have $\height{q'}-\height{I}\geq 2$.
\end{proof}

In Proposition \ref{bassnumbersequalityplusone} we show that the inequality
$$\mu^{0}(q,H_{I}^{d+1}(\omega_{R}))\leq\mu^{2}(q,H_{I}^{d}(\omega_{R}))$$
can be strict. 
\begin{lemma} \label{firsttwobassnumbersanddplusonelocalcoho} Let $(R,m,k)$ be a CM local ring with canonical module $\omega_{R}$ and $n=\dim{R}$, let $I$ be an ideal with $d=\height{I}$, and assume $\height{p}=d$ for every minimal prime $p$ of $I$. Let $E$ be a minimal injective resolution of $H_{I}^{d}(\omega_{R})$ and let $D$ be the dualizing complex. Assume $R$ is Gorenstein or assume $\widehat{R}$ is a domain. Then the following hold.
\begin{enumerate}
\item 
\[\mu^{0}(q,H_{I}^{d}(\omega_{R}))= \begin{cases} 
      1 & q\supseteq I, \height{q}=d \\
      0 & \text{otherwise} 
   \end{cases}
\]
\item 
\[\mu^{1}(q,H_{I}^{d}(\omega_{R}))= \begin{cases} 
      1 & q\supseteq I, \height{q}=d+1 \\
      0 & \text{otherwise} 
   \end{cases}
\]
\item There exists an injection $H_{I}^{d+1}(\omega_{R})\rightarrow E^{2}$. In particular, $\ass{H_{I}^{d+1}(\omega_{R})}\subseteq \ass{E^{2}}$ and
$$\mu^{0}(q,H_{I}^{d+1}(\omega_{R}))\leq\mu^{2}(q,H_{I}^{d}(\omega_{R}))$$
for all primes $q$. Moreover, if $n-d\geq 3$, then $m\notin\ass{H_{I}^{d+1}(\omega_{R})}$ if and only if $\mu^{2}(m,H_{I}^{d}(\omega_{R}))=0$, and
$$\ass{H_{I}^{d+1}(\omega_{R})}\subseteq \{q \ | \ q\supseteq I,\height{q}=d+2\}$$
if and only if 
$$\ass{E^{2}}=\{q \ | \ q\supseteq I,\height{q}=d+2\}.$$
\item Assume $R$ is a domain. If $q\in\ass{E^{2}}\backslash\ass{H_{I}^{d+1}(\omega_{R})}$ and if $\height{q}=d+2$, then $\mu^{2}(q,H_{I}^{d}(\omega_{R}))=1$.
\item If $n-d\geq 2$, then $\mu^{0}(m,H_{I}^{d+1}(\omega_{R}))=\mu^{2}(m,H_{I}^{d}(\omega_{R}))$.
\item If $H_{I}^{d+1}(\omega_{R})=0$, then $\ass{E^{2}}=\{q \ | \ q\supseteq I,\height{q}=d+2\}$.
\end{enumerate}
\end{lemma}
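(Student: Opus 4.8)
\emph{Overall approach.} The plan is to compare the minimal injective resolution $E^\bullet$ of $M:=H_I^d(\omega_R)$ with the complex $P^\bullet$ obtained by applying $\Gamma_I$ to the dualizing complex $D$. By Lemma~\ref{noitorsion} and the equality $\dim R/p=n-\height p$ valid in the CM ring $R$, one has $P^i:=\Gamma_I(D^{d+i})=\bigoplus_{p\supseteq I,\ \height p=d+i}E_R(R/p)$, a complex of injectives, and by Lemma~\ref{dualizingcomlexinjrescanmod} its cohomology is $H^i(P^\bullet)=H_I^{d+i}(\omega_R)$; in particular $H^0(P^\bullet)=M$, and $E_R(R/q)$ occurs in $P^i$ with multiplicity $1$ if $q\supseteq I$ and $\height q=d+i$, and $0$ otherwise. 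The proof of Lemma~\ref{dualizingcomplexcomputationlemma} gives a comparison chain map $\phi\colon E^\bullet\to P^\bullet$ lifting $\mathrm{id}_M$ with $\phi^0$ injective and $\coker(M\hookrightarrow E^0)\hookrightarrow\coker(M\hookrightarrow P^0)\hookrightarrow P^1$. All facts about $\ass E^i$ and about injective dimensions of localizations of $M$ are imported from Lemma~\ref{whenislocalcohocm} and Remark~\ref{dimrminushtigreaterthantwojustification}; also $M$ is $I$-torsion, so $\mu^i(q,M)=0$ whenever $q\not\supseteq I$.

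\emph{Parts (1) and (2).} For $q\supseteq I$ with $\height q=d$: $q\in\ass E^0$ by Lemma~\ref{whenislocalcohocm}(2a), so $\mu^0(q,M)\ge 1$, while the injection $M\hookrightarrow E^0\to P^0$ forces $\mu^0(q,M)\le 1$; for other $q\supseteq I$, Lemma~\ref{whenislocalcohocm}(2a) gives $0$. For (2), $\mu^1(q,M)=\mu^0(q,\coker(M\hookrightarrow E^0))$, and the embedding $\coker(M\hookrightarrow E^0)\hookrightarrow P^1$ bounds it above by $1$ while Lemma~\ref{whenislocalcohocm}(2b) supplies the matching lower bound when $\height q=d+1$. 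Comparing multiplicities, (1) yields $E^0\cong P^0$ and (2) yields $E^1\cong P^1$; since an injective endomorphism of a direct sum of indecomposable injectives (each occurring with finite multiplicity) is an automorphism, we may take $E^0=P^0$, $E^1=P^1$, $\phi^0=\mathrm{id}$ and $\phi^1\in\mathrm{Aut}(P^1)$.

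\emph{Part (3).} With these identifications $\phi^1\circ d_E^0=d_P^0$, so $\operatorname{im}(d_E^0)=(\phi^1)^{-1}\operatorname{im}(d_P^0)$ and $\phi^1$ carries $C:=\coker(d_E^0)$ isomorphically onto $P^1/\operatorname{im}(d_P^0)$. Since $H_I^{d+1}(\omega_R)=\ker(d_P^1)/\operatorname{im}(d_P^0)$ is a submodule of $P^1/\operatorname{im}(d_P^0)$ and $C=\operatorname{im}(d_E^1)\subseteq E^2$, composition gives the injection $H_I^{d+1}(\omega_R)\hookrightarrow E^2$, hence $\ass H_I^{d+1}(\omega_R)\subseteq\ass E^2$ and $\mu^0(q,H_I^{d+1}(\omega_R))\le\mu^2(q,M)$ for all $q$. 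For the ``moreover'', use $0\to H_I^{d+1}(\omega_R)\to C\to B\to 0$ with $B:=\operatorname{im}(d_P^1)\subseteq P^2$; applying $\Hom_{R_q}(k(q),-)$ gives $\mu^2(q,M)\le\mu^0(q,H_I^{d+1}(\omega_R))+\mu^0(q,P^2)$. If $n-d\ge 3$ then $\height m=n>d+2$, so $m\notin\ass P^2$ and at $q=m$ this is an equality $\mu^2(m,M)=\mu^0(m,H_I^{d+1}(\omega_R))$, giving $m\notin\ass H_I^{d+1}(\omega_R)\iff\mu^2(m,M)=0$. For $q\in\ass E^2$ with $\height q>d+2$ the same inequality (now with $\mu^0(q,P^2)=0$) forces $q\in\ass H_I^{d+1}(\omega_R)$; combined with Lemma~\ref{whenislocalcohocm}(2c)--(2d) (or Remark~\ref{dimrminushtigreaterthantwojustification} when $R$ is Gorenstein, to kill $\mu^2$ at primes of height $\le d+1$) this gives the stated equivalence of conditions.

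\emph{Parts (4)--(6), and the main obstacle.} Part (4) is the $\Hom_{R_q}(k(q),-)$ computation above at a height-$(d+2)$ prime $q\notin\ass H_I^{d+1}(\omega_R)$, where the bound becomes $1\le\mu^2(q,M)\le 0+\mu^0(q,P^2)=1$. Part (5) is the equality at $q=m$; the delicate case is $n-d=2$, where $m\in\ass P^2$ and $B=P^2$, so the sequence $0\to H_I^{d+1}(\omega_R)\to C\to P^2\to 0$ splits and one picks up an extra copy of $E_R(k)$ in $E^2$ — this is the step I expect to require the most care, and it suggests (5) may really need $n-d\ge 3$. Part (6): if $H_I^{d+1}(\omega_R)=0$ then $C\cong P^1/\operatorname{im}(d_P^0)=B\subseteq P^2$, so $0\to M\to P^0\to P^1\to B\to 0$ is exact and extends to an injective resolution of $M$ whose degree-$2$ term is a direct summand of $P^2$; since the minimal resolution is a summand of any injective resolution, $\mu^2(q,M)=0$ unless $q\supseteq I$ and $\height q=d+2$, and together with Lemma~\ref{whenislocalcohocm}(2c) this yields $\ass E^2=\{q\mid q\supseteq I,\ \height q=d+2\}$. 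The main obstacle overall is setting up $\phi$ and the identifications $E^0=P^0$, $E^1=P^1$ correctly (in particular that $\phi^1$ can be taken to be an automorphism, so $C\cong P^1/\operatorname{im}(d_P^0)$); once that is in place, parts (3)--(6) reduce to bookkeeping with the sequence $0\to H_I^{d+1}(\omega_R)\to C\to B\to 0$.
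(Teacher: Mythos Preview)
Your approach is the paper's: compare $E^\bullet$ with $\Gamma_I(D)$, use Lemma~\ref{whenislocalcohocm} to show $E^i\cong\Gamma_I(D^{d+i})$ for $i=0,1$, and then read off (3)--(6) from the short exact sequence $0\to H_I^{d+1}(\omega_R)\to\coker(d_0)\to\operatorname{im}(d_1')\to 0$ (the paper's $\ddagger\ddagger$). Your treatment of (4) via the $\Hom_{R_q}(k(q),-)$ bound is a mild streamlining of the paper's localization-and-injective-hull argument and does not visibly use the domain hypothesis.

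One caution on phrasing: the assertion that an injective endomorphism of a direct sum of indecomposable injectives with finite multiplicities is automatically an automorphism is false in general. What makes it true here is that the summands $E_R(R/q)$ all have $q$ of the same height, hence admit no nonzero cross-maps, so the endomorphism ring is $\prod_q\widehat{R_q}$ and an injective endomorphism corresponds to a tuple of units. More simply, the injection $E^i\hookrightarrow\Gamma_I(D^{d+i})$ splits (because $E^i$ is injective), and comparing Bass numbers forces the cokernel to vanish; the paper just asserts the isomorphism without spelling this out.

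Finally, your suspicion about (5) in the case $n-d=2$ is correct and in fact points to a gap in the paper's own one-line proof. The paper asserts $\Hom_R(k,H_I^{d+1}(\omega_R))\cong\Hom_R(k,\coker d_0)$, which would need $m\notin\ass(\operatorname{im} d_1')$; but when $n-d=2$ one has $\operatorname{im} d_1'\subseteq\Gamma_I(D^{d+2})=E_R(k)$, and whenever $H_I^{d+2}(\omega_R)=0$ (for instance by Hartshorne--Lichtenbaum vanishing when $R$ is a domain) the sequence $\ddagger\ddagger$ splits and yields $\mu^2(m,H_I^d(\omega_R))=\mu^0(m,H_I^{d+1}(\omega_R))+1$ rather than equality. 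So the statement as written appears to require $n-d\ge 3$, exactly as you flag.
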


\begin{proof}
Let $M=H_{I}^{d}(\omega_{R})$, let $E$ be a minimal injective resolution of $M$, and let $D$ be the dualizing complex. Then we have complexes
$$0\rightarrow M\xrightarrow{i} E^{0}\xrightarrow{d_{0}} E^{1}\xrightarrow{d_{1}}E^{2}\rightarrow\dots$$
and
$$0\rightarrow M\xrightarrow{i'} \Gamma_{I}(D^{d})\xrightarrow{d_{0}'} \Gamma_{I}(D^{d+1})\xrightarrow{d_{1}'} \Gamma_{I}(D^{d+2})\rightarrow\dots.$$
Recall that 
$$\Gamma_{I}(D^{d+i})=\bigoplus_{q\supseteq I,\height{q}=\height{I}+i}E_{R}(E/q).$$

1. and 2. Now consider the following diagram with exact rows

\[\begin{tikzcd}
	0 & M & {E^{0}} & {\text{coker}(i)} & 0 \\
	\\
	\\
	0 & M & {\Gamma_{I}(D^{d})} & {\text{coker}(i')} & 0
	\arrow[from=4-1, to=4-2]
	\arrow[from=4-2, to=4-3]
	\arrow[from=4-3, to=4-4]
	\arrow[from=4-4, to=4-5]
	\arrow[from=1-1, to=1-2]
	\arrow[from=1-2, to=1-3]
	\arrow[from=1-3, to=1-4]
	\arrow[from=1-4, to=1-5]
	\arrow[from=1-2, to=4-2]
	\arrow["{f_{0}}", dashed, from=1-3, to=4-3]
	\arrow["{h_{0}}", dashed, from=1-4, to=4-4]
\end{tikzcd}\]
As in the proof of Lemma \ref{dualizingcomplexcomputationlemma}, $f_{0}$ is one to one. Since $f_{0}$ is one to one and since $\ass{E^{0}}=\{q \ | \ q\supseteq I,\height{q}=\height{I}\}$ by Lemma \ref{whenislocalcohocm}, $f_{0}$ is an isomorphism, so
\[\mu^{0}(q,H_{I}^{d}(\omega_{R}))= \begin{cases} 
      1 & q\supseteq I, \height{q}=\height{I} \\
      0 & \text{otherwise} 
   \end{cases}
\]
Also, $f_{0}$ being an isomorphism implies that $h_{0}$ is an isomorphism by the snake lemma. Now consider the following diagram with exact rows

\[\begin{tikzcd}
	0 & {\text{coker}(i)} & {E^{1}} & {\text{coker}(d_{0})} & 0 \\
	\\
	\\
	0 & {\text{coker}(i')} & {\Gamma_{I}(D^{d+1})} & {\text{coker}(d_{0}')} & 0
	\arrow[from=4-1, to=4-2]
	\arrow[from=4-2, to=4-3]
	\arrow[from=4-3, to=4-4]
	\arrow[from=4-4, to=4-5]
	\arrow[from=1-1, to=1-2]
	\arrow[from=1-2, to=1-3]
	\arrow[from=1-3, to=1-4]
	\arrow[from=1-4, to=1-5]
	\arrow["{h_{0}}", dashed, from=1-2, to=4-2]
	\arrow["{f_{1}}", dashed, from=1-3, to=4-3]
	\arrow["{h_{1}}", dashed, from=1-4, to=4-4]
\end{tikzcd}\]
Since the composition
$$\text{coker}(i)\rightarrow \text{coker}(i')\rightarrow \Gamma_{I}(D^{d+1})$$
is one to one and since $E^{1}$ is an essential extension of $\text{coker}(i)$, $f_{1}$ is one to one. Since $f_{1}$ is one to one and since $\ass{E^{1}}=\{q \ | \ q\supseteq I,\height{q}=\height{I}+1\}$ by Lemma \ref{whenislocalcohocm}, $f_{1}$ is an isomorphism, so
\[\mu^{1}(q,H_{I}^{d}(\omega_{R}))= \begin{cases} 
      1 & q\supseteq I, \height{q}=\height{I}+1 \\
      0 & \text{otherwise} 
   \end{cases}
\]
Also, since $h_{0}$ and $f_{1}$ are isomorphisms, $h_{1}$ is an isomorphism by the snake lemma.\newline

3. To see that $\ass{H_{I}^{d+1}(\omega_{R})}\subseteq \ass{E^{2}}$, notice that we have an exact sequence
$$0\rightarrow \text{ker}d_{1}'/\text{im}d_{0}'\rightarrow \Gamma_{I}(D^{d+1})/\text{im}d_{0}'\rightarrow \Gamma_{I}(D^{d+1})/\text{ker}d_{1}'\rightarrow 0.$$
Since $H_{I}^{d+1}(\omega_{R})\cong \text{ker}d_{1}'/\text{im}d_{0}'$ and since $\text{coker}(d_{0})\cong \text{coker}(d_{0}')\cong \Gamma_{I}(D^{d+1})/\text{im}d_{0}'$, the exact sequence becomes
$$\ddagger\ddagger\quad0\rightarrow H_{I}^{d+1}(\omega_{R})\rightarrow \text{coker}(d_{0})\rightarrow {im}d_{1}'\rightarrow 0,$$
giving a sequence of injections
$$H_{I}^{d+1}(\omega_{R})\rightarrow \text{coker}(d_{0})\rightarrow E^{2}.$$
We now prove the in particular part. Let $G$ be the injective hull of $H_{I}^{d+1}(\omega_{R})$. Since we have an injection $H_{I}^{d+1}(\omega_{R})\rightarrow E^{2}$ and since $G$ is an essential extension of $H_{I}^{d+1}(\omega_{R})$, there exists an injection $G\rightarrow E^{2}$, giving
$$\mu^{0}(q,H_{I}^{d+1}(\omega_{R}))\leq\mu^{2}(q,H_{I}^{d}(\omega_{R}))$$
for all primes $q$.\newline

We now prove the moreover part. The injection $H_{I}^{d+1}(\omega_{R})\rightarrow E^{2}$ shows that $m\notin\ass{E^{2}}$ implies $m\notin\ass{H_{I}^{d+1}(\omega_{R})}$. Also, $\ddagger\ddagger$ gives
\begin{align*}
(3)\quad\quad\ass{E^{2}} = \ass{\text{coker}(d_{0})} &\subseteq\ass{H_{I}^{d+1}(\omega_{R})}\bigcup\ass{\text{im}d_{1}'} \\
&\subseteq\ass{H_{I}^{d+1}(\omega_{R})}\bigcup\ass{\Gamma_{I}(D^{d+2})},
\end{align*}
so $m\notin\ass{H_{I}^{d+1}(\omega_{R})}$ implies $m\notin\ass{E^{2}}$ (recall that in the moreover part we are assuming $n-d\geq 3$, so $m\notin\ass{\Gamma_{R}(D^{d+2})}$).\newline

Finally,
$$\ass{H_{I}^{d+1}(\omega_{R})}\subseteq \{q \ | \ q\supseteq I,\height{q}=\height{I}+2\}$$
if and only if 
$$\ass{E^{2}}=\{q \ | \ q\supseteq I,\height{q}=\height{I}+2\}$$
by $(3)$ and the fact that $\ass{E^{2}}\supseteq\{q \ | \ q\supseteq I,\height{q}=\height{I}+2\}$ (Lemma \ref{whenislocalcohocm}).
\newline

4. Since $q$ is in $\ass{E^{2}}\backslash\ass{H_{I}^{d+1}(\omega_{R})}$, $q$ is not in $\supp{H_{I}^{d+1}(\omega_{R})}$. This is because if $q$ is in $\supp{H_{I}^{d+1}(\omega_{R})}$, then $q$ is in $\supp{E^{2}}$ (since $H_{I}^{d+1}(\omega_{R})$ is a submodule of $E^{2}$), so $\height{q}\geq\height{I}+2$ by Lemma \ref{whenislocalcohocm}. By assumption $\height{q}=\height{I}+2$, so $q$ is a minimal prime, and so in $\ass{H_{I}^{d+1}(\omega_{R})}$, a contradiction.\newline

Now localizing $\ddagger\ddagger$ at $q$ shows $(\text{coker}(d_{0}))_{q}$ and $(\text{im}d_{1}')_{q}$ are isomorphic. The injection
$$\text{im}d_{1}'\rightarrow \Gamma_{I}(D^{d+2})$$
gives an injection
$$(\text{im}d_{1}')_{q}\rightarrow E_{q}(R/q),$$
so $E_{R}(R/q)$ is the injective hull of $(\text{im}d_{1}')_{q}$ since $E_{R}(R/q)$ is indecomposable. Since $E_{q}^{2}$ is the injective hull of $(\text{coker}(d_{0}))_{q}$ and since $(\text{coker}(d_{0}))_{q}\cong (\text{im}d_{1}')_{q}$, we have $E_{q}^{2}\cong E_{q}(R/q)$, proving that $\mu^{2}(q,H_{I}^{d}(\omega_{R}))=1$.\newline

5. Applying $\Hom_{R}(k,\_)$ to $\ddagger\ddagger$ we get
$$\Hom_{R}(k,H_{I}^{d+1}(\omega_{R}))\cong\Hom_{R}(k,\text{coker}d_{0}),$$
so $\mu^{0}(m,H_{I}^{d+1}(R))=\mu^{0}(m,\text{coker}d_{0})=\mu^{2}(m,H_{I}^{d}(R))$.\newline

6. If $H_{I}^{d+1}(\omega_{R})=0$, then $(3)$ implies 
$$\ass{H_{I}^{d}(\omega_{R})}\subseteq \{q \ | \ q\supseteq I,\height{q}=\height{I}+2\}.$$
Also, $\ass{H_{I}^{d}(\omega_{R})}\supseteq \{q \ | \ q\supseteq I,\height{q}=\height{I}+2\}$ by Lemma \ref{whenislocalcohocm}.
\end{proof}

\begin{lemma} \label{mnotinassofsecondnonzerolocalcohoequivalentstatments} Let $(R,m,k)$ be a CM local domain with canonical module $\omega_{R}$ and $n=\dim{R}$, let $I$ be an ideal with $d=\height{I}$, and assume $\height{p}=d$ for every minimal prime $p$ of $I$. Assume $R$ is Gorenstein or assume $\widehat{R}$ is a domain. If $E$ is the minimal injective resolution of $H_{I}^{d}(\omega_{R})$ and $n-d=3$, then the following are equivalent
\begin{enumerate}
\item $m\notin\ass{H_{I}^{d+1}(\omega_{R})}$
\item $\mu^{2}(m,H_{I}^{d}(\omega_{R}))=0$
\item $\ass{E^{2}}=\{q \ | \ q\supseteq I,\height{q}=d+2\}$
\end{enumerate}
\end{lemma}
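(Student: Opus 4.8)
The plan is to reduce everything to the results already proved in this section, since Lemma \ref{firsttwobassnumbersanddplusonelocalcoho} (applied with $n-d=3\geq 3$) already supplies two of the needed equivalences. Its part~3 states that $m\notin\ass{H_{I}^{d+1}(\omega_{R})}$ if and only if $\mu^{2}(m,H_{I}^{d}(\omega_{R}))=0$, giving $(1)\iff(2)$, and that
$$\ass{H_{I}^{d+1}(\omega_{R})}\subseteq\{q \ | \ q\supseteq I,\ \height{q}=d+2\}\quad\Longleftrightarrow\quad\ass{E^{2}}=\{q \ | \ q\supseteq I,\ \height{q}=d+2\},$$
which identifies $(3)$ with the containment on the left. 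So the only thing left to prove is
$$m\notin\ass{H_{I}^{d+1}(\omega_{R})}\quad\Longleftrightarrow\quad\ass{H_{I}^{d+1}(\omega_{R})}\subseteq\{q \ | \ q\supseteq I,\ \height{q}=d+2\}.$$
The implication $\Leftarrow$ is immediate, since $\height{m}=n=d+3\neq d+2$ forces $m$ out of that set.

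For the implication $\Rightarrow$ I would fix $q\in\ass{H_{I}^{d+1}(\omega_{R})}$ and pin down its height. First, $H_{I}^{d+1}(\omega_{R})$ is $I$-torsion, so $\supp{H_{I}^{d+1}(\omega_{R})}\subseteq V(I)$ and hence $q\supseteq I$. Next, by the injection $H_{I}^{d+1}(\omega_{R})\hookrightarrow E^{2}$ from Lemma \ref{firsttwobassnumbersanddplusonelocalcoho} part~3 we have $q\in\ass{E^{2}}$, and since $R$ is a domain the final part of Lemma \ref{whenislocalcohocm} gives $\height{q}\geq d+2$. On the other hand the hypothesis $m\notin\ass{H_{I}^{d+1}(\omega_{R})}$ says $q\neq m$, so $\height{q}\leq n-1=d+2$. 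Combining, $\height{q}=d+2$, as required.

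I do not expect a real obstacle: the substantive work is already contained in Lemma \ref{whenislocalcohocm} and Lemma \ref{firsttwobassnumbersanddplusonelocalcoho}. The one point to get right is the pinching step — observing that the lower bound $\height{q}\geq d+2$ from the domain hypothesis together with the exact value $n-d=3$ leaves only the two possibilities $\height{q}=d+2$ or $q=m$, so removing $m$ from the associated primes fixes the height exactly. (Alternatively one could invoke the support-dimension bound $\dim\supp{H_{I}^{d+1}(\omega_{R})}\leq n-d-2=1$ to the same effect, but routing through the last part of Lemma \ref{whenislocalcohocm} keeps the argument self-contained within this section.)
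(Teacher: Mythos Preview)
Your proof is correct and uses essentially the same ingredients as the paper's: both rely on Lemma~\ref{firsttwobassnumbersanddplusonelocalcoho} for $(1)\iff(2)$ and on the height bound from Lemma~\ref{whenislocalcohocm} (part~2(d)) together with $n-d=3$ to pin down heights. The only cosmetic difference is that the paper proves $(2)\iff(3)$ directly by height-pinching on $\ass{E^{2}}$ itself (since $\mu^{2}(m,H_{I}^{d}(\omega_{R}))=0$ simply means $m\notin\ass{E^{2}}$), whereas you route $(1)\iff(3)$ through $\ass{H_{I}^{d+1}(\omega_{R})}$ via the injection into $E^{2}$; the paper's path is marginally shorter but the content is the same.
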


\begin{proof}
Note that 1 and 2 are equivalent by Lemma \ref{firsttwobassnumbersanddplusonelocalcoho}, so we just need to show 2 and 3 are equivalent. Notice $\mu^{2}(m,H_{I}^{d}(\omega_{R}))=0$ if and only if $m\not\in\ass{E^{2}}$. Also, by Lemma \ref{whenislocalcohocm} 
$$\ass{E^{2}}\supseteq\{q \ | \ q\supseteq I,\height{q}=\height{I}+2\}$$
and $q\in\ass{E^{2}}$ implies $\height{q}\geq\height{I}+2$. Therefore, if $q$ is in $\ass{E^{2}}$, then $\height{q}=\height{I}+2$ or $\height{q}=\height{I}+3$, so $m\not\in\ass{E^{2}}$ is equivalent to
$$\ass{E^{2}}=\{q \ | \ q\supseteq I,\height{q}=\height{I}+2\}.$$
\end{proof}

Note that if $n-d=0$, then $\sqrt{I}=m$, so $H_{I}^{d}(\omega_{R})\cong E_{R}(k)$ by \cite[Corollary 3.5.9]{brunsherzog}. This justifies the assumption $d<n$ in the following lemma.
\begin{lemma} \label{upperboundfordimensionofsupportoflocalcohomology} Let $(R,m,k)$ be a Noetherian local domain with $n=\dim{R}$ and let $I$ be an ideal with $d=\height{I}$. Assume $d<n$. If $\height{p}=d$ for every minimal prime $p$ of $I$, then
$$\dim{\supp{H_{I}^{d+i}(M)}}\leq n-(d+i+1)$$
for every module $M$ and all $i>0$.
\end{lemma}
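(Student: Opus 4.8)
The plan is to reduce the dimension bound to a height statement about the primes in the support of $H_{I}^{d+i}(M)$ and then to invoke a Hartshorne--Lichtenbaum-type vanishing theorem at the relevant localizations. First I would record two elementary observations: in any Noetherian local ring one has $\height{q}+\dim{R/q}\le \dim{R}=n$ for every prime $q$ (concatenate a saturated chain below $q$ with one above $q$), and $\supp{H_{I}^{d+i}(M)}$ is a closed subset, so that $\dim{\supp{H_{I}^{d+i}(M)}}=\sup\{\dim{R/q}\ | \ q\in\supp{H_{I}^{d+i}(M)}\}$. Combining these, it is enough to prove: if $q\in\supp{H_{I}^{d+i}(M)}$ and $i>0$, then $\height{q}\ge d+i+1$.

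To prove this, I would fix such a prime $q$ (necessarily containing $I$), pass to $R_{q}$ via the isomorphism $H_{I}^{d+i}(M)_{q}\cong H_{I_{q}}^{d+i}(M_{q})$, and set $s=\height{q}=\dim{R_{q}}$. If $s<d+i$ then $H_{I_{q}}^{d+i}(M_{q})=0$ by Grothendieck's vanishing theorem, contradicting $q\in\supp{H_{I}^{d+i}(M)}$; hence $s\ge d+i$, and the point is to exclude $s=d+i$. For this I would use the hypothesis on the minimal primes of $I$: every minimal prime of $I_{q}$ is of the form $pR_{q}$ for a minimal prime $p$ of $I$ with $p\subseteq q$, and $\height{pR_{q}}=\dim{R_{p}}=\height{p}=d<s=\dim{R_{q}}$, so $qR_{q}$ is not a minimal prime of $I_{q}$ and therefore $\dim{R_{q}/I_{q}}\ge 1$, i.e. $\sqrt{I_{q}}\ne qR_{q}$. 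Applying \cite[Theorem 1.1]{bagheriyeh} to the local domain $R_{q}$ then gives $H_{I_{q}}^{\dim{R_{q}}}(R_{q})=0$, and since the cohomological dimension of a module is at most that of the ring (as in Remark \ref{dimrminushtigreaterthantwojustification}), we get $H_{I_{q}}^{d+i}(M_{q})=H_{I_{q}}^{s}(M_{q})=0$, again a contradiction. Thus $\height{q}=s\ge d+i+1$, and taking the supremum over $q\in\supp{H_{I}^{d+i}(M)}$ gives $\dim{\supp{H_{I}^{d+i}(M)}}\le n-(d+i+1)$.

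The step I expect to carry all the weight is the use of the Hartshorne--Lichtenbaum-type vanishing \cite[Theorem 1.1]{bagheriyeh}: it is precisely what converts the dimension-theoretic condition $\dim{R_{q}/I_{q}}>0$ into the vanishing of the top local cohomology of $R_{q}$, and it is essential that we first replace $M_{q}$ by $R_{q}$ using the comparison of cohomological dimensions so that the theorem applies. The remaining ingredients --- commutation of local cohomology with localization, Grothendieck vanishing, the inequality $\height{q}+\dim{R/q}\le n$, and the behavior of minimal primes under localization --- are all routine, so I do not anticipate difficulty there.
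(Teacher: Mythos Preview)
Your proof is correct and follows essentially the same route as the paper: localize at a prime $q$ in the support, use the equidimensionality hypothesis on the minimal primes of $I$ to see that $\sqrt{I_{q}}\neq qR_{q}$, invoke \cite[Theorem 1.1]{bagheriyeh} for the vanishing of the top local cohomology of the domain $R_{q}$, and then pass from $R_{q}$ to $M_{q}$ via the comparison of cohomological dimensions. You are in fact slightly more careful than the paper, explicitly handling the case $\height{q}<d+i$ via Grothendieck vanishing; one small remark is that the support of $H_{I}^{d+i}(M)$ need not be a closed subset of $\spec R$ (only specialization-closed), but the dimension identity you use holds for any specialization-closed set, so this does not affect the argument.
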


\begin{proof}
Consider $H_{I}^{d+i}(M)$ for some $i>0$ and let $q$ be a prime containing $I$ with $\height{q}=\height{I}+i$. Then $\height{I}=\height{I_{q}}$ since if $p_{0}\subseteq\dots\subseteq p_{d}=p\subseteq q$ is a chain in $R$, then it is a chain in $R_{q}$. This tells us that 
$$\height{I_{q}}=\height{I}<\height{q}=\dim{R_{q}},$$
so $\sqrt{0+I_{q}}\neq q_{q}$, and $H_{I_{q}}^{d+i}(R_{q})=0$ by \cite[Theorem 1.1]{bagheriyeh}. Since the cohomological dimension of $N$ is less than or equal to the cohomological dimension of $R_{q}$ for any $R_{q}$ module $N$, we have 
$$H_{I}^{d+i}(M)_{q}\cong H_{I_{q}}^{d+i}(M_{q})=0,$$
and so $q$ is not in $\supp{H_{I}^{d+i}(M)}$ for any prime $q$ with $\height{q}=\height{I}+i$. Therefore
$$\dim{\supp{H_{I}^{d+i}(M)}}<\dim{R}-\height{q}=\dim{R}-(\height{I}+i).$$
\end{proof}

\begin{definition} Let $M$ be a module over a ring $R$ and let $I$ be an ideal. We say $M$ is $I$ cofinite if $\supp{M}\subseteq V(I)$ and $\ext_{R}^{i}(R/I,M)$ is finitely generated for all $i$.
\end{definition}

\begin{remark} \label{injectivedimensionofdplusonelocalcohoupperbound} Let $(R,m,k)$ be a regular local ring of dimension $n$ containing  a field and let $I$ be an ideal with $d=\height{I}$. Assume $d<n$. If $\height{p}=d$ for every minimal prime $p$ of $I$, then
$$\idim_{R}(H_{I}^{d+1}(R))\leq \dim{\supp{H_{I}^{d+1}(R)}}\leq n-d-2.$$
\end{remark}

\begin{proof}
The first inequality is by \cite[Corollary 3.9]{hunekeone} and \cite[Theorem 3.4.a]{lyubeznik} and the second inequality is by Lemma \ref{upperboundfordimensionofsupportoflocalcohomology}.
\end{proof}

Note that Remark \ref{injectivedimensionofdplusonelocalcohoupperbound} shows if $d<n$ and if $\idim_{R}H_{I}^{d+1}(R)=n-d-2$, then $\dim{\supp{H_{I}^{d+1}(R)}}=n-d-2$. Also, the only part of the following proposition that needs $H_{I}^{d+1}(R)$ to be $I$ cofinite is the proof of 1 implies 4.
\begin{proposition} \label{nminusdequalsthreewithfourequivalentconditions} Let $(R,m,k)$ be a regular local ring of dimension $n$ containing a field, let $I$ be an ideal with $d=\height{I}$, and assume $\height{p}=d$ for every minimal prime $p$ of $I$. Let $E$ be the minimal injective resolution of $H_{I}^{d}(R)$ and assume $n-d=3$. If $H_{I}^{d+1}(R)\neq 0$ and if $H_{I}^{d+1}(R)$ is $I$ cofinite, then the following are equivalent
\begin{enumerate}
\item $m\notin\ass{H_{I}^{d+1}(R)}$
\item $\mu^{2}(m,H_{I}^{d}(R))=0$
\item $\ass{E^{2}}=\{q \ | \ q\supseteq I,\height{q}=d+2\}$
\item $\idim_{R}H_{I}^{d+1}(R)=1$ and every associated prime of $H_{I}^{d+1}(R)$ has the same height
\end{enumerate}
\end{proposition}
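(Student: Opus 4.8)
The plan is to deduce the equivalence of (1), (2), (3) immediately from results already in place, and then treat $(4)\Leftrightarrow(1)$ as the new content. Since $R$ is regular local it is a Gorenstein local domain with $\omega_R\cong R$, so $H_I^d(\omega_R)=H_I^d(R)$ and $H_I^{d+1}(\omega_R)=H_I^{d+1}(R)$; as $n-d=3$, Lemma \ref{mnotinassofsecondnonzerolocalcohoequivalentstatments} applies verbatim and yields $(1)\Leftrightarrow(2)\Leftrightarrow(3)$. (Neither the hypothesis $H_I^{d+1}(R)\neq 0$ nor $I$-cofiniteness is needed for this part.) So it remains to prove $(4)\Leftrightarrow(1)$.

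For $(4)\Rightarrow(1)$: assuming $\idim_R H_I^{d+1}(R)=1$, Remark \ref{injectivedimensionofdplusonelocalcohoupperbound} gives $1=\idim_R H_I^{d+1}(R)\le\dim\supp H_I^{d+1}(R)\le n-d-2=1$, so $\dim\supp H_I^{d+1}(R)=1$. Hence there is a prime $q_0\in\supp H_I^{d+1}(R)$ with $\dim R/q_0=1$, i.e. $\height q_0=n-1=d+2$; any prime properly contained in $q_0$ has coheight $\ge 2$, so $q_0$ is minimal in the support and therefore $q_0\in\ass H_I^{d+1}(R)$. By hypothesis every associated prime has height $\height q_0=d+2<n=\height m$, so $m\notin\ass H_I^{d+1}(R)$, which is (1).

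For $(1)\Rightarrow(4)$: assume $m\notin\ass H_I^{d+1}(R)$. By Remark \ref{injectivedimensionofdplusonelocalcohoupperbound}, $\dim\supp H_I^{d+1}(R)\le 1$; and since $H_I^{d+1}(R)\neq 0$, if this dimension were $0$ then $\supp H_I^{d+1}(R)=\{m\}$, forcing $m\in\ass H_I^{d+1}(R)$, a contradiction — so $\dim\supp H_I^{d+1}(R)=1$. Then each $q\in\ass H_I^{d+1}(R)$ has $\dim R/q\le 1$, hence $\height q\ge d+2$, and $\height q\neq n$ since $q\neq m$; thus every associated prime has height exactly $d+2$, which is the second clause of (4). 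For the first clause, $\idim_R H_I^{d+1}(R)\le\dim\supp H_I^{d+1}(R)=1$, so it is enough to rule out $\idim_R H_I^{d+1}(R)=0$. If $H_I^{d+1}(R)$ were injective it would decompose as $\bigoplus_q E_R(R/q)^{(a_q)}$ over its associated primes $q$ — all of height $d+2$, hence with $\dim R/q=1$ — with some $a_q\ge 1$. Applying $\Hom_R(R/I,-)$ (which commutes with this direct sum) and using $I\subseteq q$, we see that $\Hom_R(R/I,H_I^{d+1}(R))$ contains a copy of $\Hom_R(R/q,E_R(R/q))$; this last module sits between $R/q$ and $E_R(R/q)$, so it is an injective essential extension of $R/q$ over $R/q$, hence equals $E_{R/q}(R/q)\cong\operatorname{Frac}(R/q)$, which is not a finitely generated $R$-module because $R/q$ is a local domain of positive dimension (multiplication by any nonzero element of its maximal ideal is surjective on $\operatorname{Frac}(R/q)$, so Nakayama forbids finite generation). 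This contradicts $I$-cofiniteness of $H_I^{d+1}(R)$, which requires $\ext^0_R(R/I,H_I^{d+1}(R))=\Hom_R(R/I,H_I^{d+1}(R))$ to be finitely generated. Hence $\idim_R H_I^{d+1}(R)=1$, completing (4).

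The only genuinely new obstacle is this last step — ruling out that $H_I^{d+1}(R)$ is injective under (1) — and it is exactly where $I$-cofiniteness enters, as anticipated in the remark preceding the proposition; everything else is bookkeeping with the dimension/injective-dimension estimate of Remark \ref{injectivedimensionofdplusonelocalcohoupperbound} and the list of associated primes of $H_I^d(R)$ (equivalently of $E^2$) supplied by Lemma \ref{mnotinassofsecondnonzerolocalcohoequivalentstatments}. The one sub-fact worth spelling out is the identification $\Hom_R(R/q,E_R(R/q))\cong\operatorname{Frac}(R/q)$, which follows from the essentiality of $R/q\hookrightarrow E_R(R/q)$ together with the fact that $\Hom_R(R/q,-)$ sends $R$-injectives to $R/q$-injectives.
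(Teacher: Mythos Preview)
Your proof is correct. The equivalence $(1)\Leftrightarrow(2)\Leftrightarrow(3)$ and the implication $(4)\Rightarrow(1)$ match the paper's argument essentially verbatim. The one place where you genuinely diverge is in the step $(1)\Rightarrow \idim_R H_I^{d+1}(R)\ge 1$: the paper invokes \cite[Theorem~2.3]{hellus}, which says that for an $I$-cofinite module $M$ one has $\dim\supp M\le\idim_R M$, and applies it with $M=H_I^{d+1}(R)$. You instead argue directly: if $H_I^{d+1}(R)$ were injective, it would be a direct sum of copies of $E_R(R/q)$ with $\height q=d+2$, and then $\Hom_R(R/I,H_I^{d+1}(R))$ would contain $\Hom_R(R/q,E_R(R/q))\cong E_{R/q}(R/q)\cong\operatorname{Frac}(R/q)$, which is not finitely generated since $\dim R/q=1$, contradicting cofiniteness. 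Your argument is more self-contained and makes transparent exactly how cofiniteness is used; the paper's is shorter and gives the inequality in one stroke from a known result. (Your route is essentially reproving the special case of Hellus's inequality that is needed here.)

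A minor secondary difference: to see that every associated prime of $H_I^{d+1}(R)$ has height $d+2$, the paper routes through $\ass H_I^{d+1}(R)\subseteq\ass E^2$ and Lemma~\ref{whenislocalcohocm}, whereas you read it off directly from $\dim\supp H_I^{d+1}(R)=1$. Both are fine.
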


\begin{proof}
Lemma \ref{mnotinassofsecondnonzerolocalcohoequivalentstatments} shows $1,2,3$ are equivalent, so it is enough to show $1$ and $4$ are equivalent. First note that 
$$(4)\quad\quad\idim_{R}(H_{I}^{d+1}(R))\leq \dim{\supp{H_{I}^{d+1}(R)}}\leq n-d-2=1$$
by Remark \ref{injectivedimensionofdplusonelocalcohoupperbound}.\newline

If 1 holds, then $0<\dim{\supp{H_{I}^{d+1}(R)}}$ since $H_{I}^{d+1}(R)\neq 0$ and since $m\notin\ass{H_{I}^{d+1}(R)}$. Also, since $H_{I}^{d+1}(R)$ is $I$ cofinite, \cite[Theorem 2.3]{hellus} gives
$$\dim{\supp{H_{I}^{d+1}(R)}}\leq \idim_{R}(H_{I}^{d+1}(R)),$$
so $(4)$ implies $\idim_{R}(H_{I}^{d+1}(R))=1$. Now since $n-d=3$, since $\ass{H_{I}^{d+1}(R)}\subseteq\ass{E^{2}}$ (Lemma \ref{firsttwobassnumbersanddplusonelocalcoho}), and since $q\in\ass{E^{2}}$ implies $\height{q}\geq\height{I}+2$ (Lemma \ref{whenislocalcohocm}), $q\in\ass{H_{I}^{d+1}(R)}$ implies $\height{q}=\height{I}+2$ or $\height{q}=\height{I}+3$. Since $m\notin\ass{H_{I}^{d+1}(\omega_{R})}$, $q\in\ass{H_{I}^{d+1}(R)}$ implies $\height{q}=\height{I}+2$, proving 4.\newline

Now assume 4. Then $(4)$ and the assumption $\idim_{R}H_{I}^{d+1}(R)=1$ give
$$1=\idim_{R}(H_{I}^{d+1}(R))\leq \dim{\supp{H_{I}^{d+1}(R)}}\leq n-d-2=1,$$
so $\dim{\supp{H_{I}^{d+1}(R)}}=1$. Therefore there exists a minimal prime $q$ of $H_{I}^{d+1}(R)$ with $q\supseteq I$ and $\height{q}=\height{I}+2$. Since $q$ is an associated prime and since every associated prime of $H_{I}^{d+1}(R)$ has the same height, we have $m\notin\ass{H_{I}^{d+1}(R)}$, proving 1.
\end{proof}

Recall that if $R$ is a regular local ring containing a field, then $\mu^{i}(q,H_{I}^{j}(R))<\infty$ for all $i$, all $j$, all $I$, and all primes $q$ by \cite[Theorem 2.1]{hunekeone} and \cite[Theorem 3.4.d]{lyubeznik}. Also, regular local rings are Gorenstein and have the property that $\widehat{R}$ is a domain. Note that we have been assuming either $R$ is Gorenstein or $\widehat{R}$ is a domain in many of our results.
\begin{proposition} \label{bassnumbersequalityplusone} Let $(R,m,k)$ be a regular local ring containing a field with $n=\dim{R}$, let $I$ be an ideal with $d=\height{I}$, and assume $\height{p}=d$ for every minimal prime $p$ of $I$. If $n-d\geq 3$, then
$$\mu^{0}(q,H_{I}^{d+1}(R))+1=\mu^{2}(q,H_{I}^{d}(R))$$
for every $q\in\ass{H_{I}^{d+1}(R)}$ with $\height{q}=d+2$.
\end{proposition}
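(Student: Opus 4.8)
The plan is to localize at $q$, reducing to the ``boundary'' case $\dim{R}=d+2$, and there to identify $H_{I}^{d+1}(R)$ and the degree-$2$ term of the minimal injective resolution of $H_{I}^{d}(R)$ as finite direct sums of $E_{R}(k)$. So first I would put $S=R_{q}$, $J=I_{q}$, and $\mathfrak{n}=qS$; then $S$ is a regular local ring containing a field with $\dim{S}=\height{q}=d+2$, and since heights of primes contained in $q$ are unchanged by localization, $\height{J}=d$ and every minimal prime of $J$ has height $d$. Local cohomology commutes with localization, so $H_{I}^{j}(R)_{q}\cong H_{J}^{j}(S)$, and localizing the minimal injective resolution of $H_{I}^{d}(R)$ at $q$ yields the minimal injective resolution of $H_{J}^{d}(S)$; hence $\mu^{i}(q,H_{I}^{j}(R))=\mu^{i}(\mathfrak{n},H_{J}^{j}(S))$ for all $i,j$, and $q\in\ass{H_{I}^{d+1}(R)}$ forces $\mathfrak{n}\in\ass_{S}{H_{J}^{d+1}(S)}$ (so $H_{J}^{d+1}(S)\neq 0$). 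It therefore suffices to prove the statement for $(S,J,\mathfrak{n})$; in other words, I may assume $n=\dim{R}=d+2$, $R$ regular local containing a field, $\height{p}=d$ for all minimal primes $p$ of $I$, and $m\in\ass{H_{I}^{d+1}(R)}$.

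In this situation $\height{I}=d<n$, so $H_{I}^{d+2}(R)=0$ by Remark \ref{dimrminushtigreaterthantwojustification}. Let $D$ be the dualizing complex of $R$ and compare the minimal injective resolution $0\to H_{I}^{d}(R)\to E^{0}\xrightarrow{d_{0}}E^{1}\xrightarrow{d_{1}}E^{2}\to\cdots$ with $\Gamma_{I}(D)$, exactly as in the proof of Lemma \ref{firsttwobassnumbersanddplusonelocalcoho}. Since $\idim_{R}H_{I}^{d}(R)=\dim\supp{H_{I}^{d}(R)}=n-d=2$ by \cite[Proposition 3.5]{dorreh} (or by Lemma \ref{dimsuppequalsnminusd} together with \cite[Corollary 3.9]{hunekeone} and \cite[Theorem 3.4.a]{lyubeznik}), the resolution stops at $E^{2}$ (i.e. $E^{i}=0$ for $i\geq 3$), so $d_{1}$ is onto and $\coker(d_{0})\cong\im(d_{1})=E^{2}$ is injective. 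Also $\dim{R}=d+2$ gives $\Gamma_{I}(D^{d+2})=E_{R}(k)$, and the vanishing $H_{I}^{d+2}(R)=\coker\bigl(d_{1}'\colon\Gamma_{I}(D^{d+1})\to\Gamma_{I}(D^{d+2})\bigr)=0$ forces $\im(d_{1}')=E_{R}(k)$. Hence the exact sequence $(\ddagger\ddagger)$ appearing in the proof of Lemma \ref{firsttwobassnumbersanddplusonelocalcoho} becomes
$$0\longrightarrow H_{I}^{d+1}(R)\longrightarrow E^{2}\longrightarrow E_{R}(k)\longrightarrow 0.$$

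To conclude I would count copies of $E_{R}(k)$. Since $R$ is a domain, every associated prime of $E^{2}$ has height $\geq d+2=n$ by Lemma \ref{whenislocalcohocm}, hence equals $m$, so $E^{2}\cong E_{R}(k)^{\oplus a}$ with $a=\mu^{2}(m,H_{I}^{d}(R))$. Moreover $H_{I}^{d+1}(R)$ embeds in $E^{2}$ and so is $m$-torsion, and by Remark \ref{injectivedimensionofdplusonelocalcohoupperbound}, $\idim_{R}H_{I}^{d+1}(R)\leq\dim\supp{H_{I}^{d+1}(R)}\leq n-d-2=0$; being injective and $m$-torsion, $H_{I}^{d+1}(R)\cong E_{R}(k)^{\oplus s}$ with $s=\mu^{0}(m,H_{I}^{d+1}(R))$. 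The displayed sequence is thus a short exact sequence $0\to E_{R}(k)^{\oplus s}\to E_{R}(k)^{\oplus a}\to E_{R}(k)\to 0$ of injective modules, which splits, so by uniqueness of injective decompositions $a=s+1$. Undoing the localization gives $\mu^{2}(q,H_{I}^{d}(R))=\mu^{0}(q,H_{I}^{d+1}(R))+1$.

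The step needing the most care is the reduction: one must check that $S=R_{q}$ inherits every standing hypothesis (regularity, containing a field, the height condition on the minimal primes of $J$, $\dim{S}=d+2$) and that Bass numbers, associated primes, and the sequence $(\ddagger\ddagger)$ all localize as claimed. The conceptual heart, however, is that after localizing we sit in the case $\dim{R}-\height{I}=2$, where $\Gamma_{I}(D^{d+1})\to\Gamma_{I}(D^{d+2})$ is \emph{surjective} onto $E_{R}(k)$; this surjection is precisely what contributes the extra ``$+1$'' — which is absent at the maximal ideal in Lemma \ref{firsttwobassnumbersanddplusonelocalcoho}(5) because there $\Gamma_{I}(D^{d+2})$ carries no $E_{R}(k)$-summand, as $\height{m}>d+2$.
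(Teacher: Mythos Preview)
Your proof is correct and is essentially the paper's own argument. The paper keeps the global ring $R$ and localizes the exact sequence $(\ddagger\ddagger)$ at $q$, whereas you first pass to $S=R_{q}$ and then run the comparison with $\Gamma_{J}(D)$ there; in both cases one lands on the same split short exact sequence of finite direct sums of $E_{R}(R/q)$ and reads off $a=s+1$.
\medskip

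One minor remark: neither your argument nor the paper's actually uses the hypothesis $q\in\ass{H_{I}^{d+1}(R)}$. The identity $\mu^{0}(q,H_{I}^{d+1}(R))+1=\mu^{2}(q,H_{I}^{d}(R))$ therefore holds for \emph{every} prime $q\supseteq I$ of height $d+2$, and specializes to $\mu^{2}(q,H_{I}^{d}(R))=1$ when $q\notin\ass{H_{I}^{d+1}(R)}$, recovering part~2 of the Main Theorem in the regular case. Also, to justify ``$a=s+1$'' from the split sequence you are implicitly using that $a$ and $s$ are finite, which follows from \cite[Theorem 2.1]{hunekeone} and \cite[Theorem 3.4.d]{lyubeznik}; the paper invokes this finiteness explicitly.
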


\begin{proof}
First note that since $\height{p}=d$ for every minimal prime $p$ of $I$, if $q$ is a prime containing $I$, then there exists a chain of primes $p_{0}\subsetneq\dots\subsetneq p_{\height{I}}=p\subseteq q$. Since this chain is also a chain in $R_{q}$, $\height{I_{q}}=\height{I}$. Now let 
$$E=0\rightarrow E^{0}\xrightarrow{d_{0}} E^{1}\xrightarrow{d_{1}}\dots$$
be the minimal injective resolution of $H_{I}^{d}(R)$, let $D$ be the dualizing complex, and let
$$\Gamma_{I}(D)=0\rightarrow \Gamma_{I}(D^{d})\xrightarrow{d_{0}'} \Gamma_{I}(D^{d+1})\xrightarrow{d_{1}'} \dots.$$
Then the proof of Lemma \ref{firsttwobassnumbersanddplusonelocalcoho} gives the exact sequence
$$\ddagger\ddagger\quad0\rightarrow H_{I}^{d+1}(R)\rightarrow \text{coker}(d_{0})\rightarrow {im}d_{1}'\rightarrow 0.$$
Since $\height{I_{q}}=\height{I}$, we have
$$\idim_{R_{q}}H_{I}^{d}(R)_{q}=\dim{R_{q}}-\height{I_{q}}=\height{q}-\height{I}=2$$
where the first equality is by \cite[Proposition 3.5]{dorreh}, so
$$E_{q}=0\rightarrow (E^{0})_{q}\xrightarrow{(d_{0})_{q}} (E^{1})_{q}\xrightarrow{(d_{1})_{q}} (E^{2})_{q}\rightarrow 0.$$
This tells us that 
$$(\text{coker}(d_{0}))_{q}\cong (E^{2})_{q}\cong\bigoplus E_{R}(R/q),$$
which is a finite sum. Also,
$$\Gamma_{I}(D)_{q}=0\rightarrow \Gamma_{I}(D^{d})_{q}\xrightarrow{(d_{0}')_{q}} \Gamma_{I}(D^{d+1})_{q}\xrightarrow{(d_{1}')_{q}} \Gamma_{I}(D^{d+2})_{q}\rightarrow 0.$$
Since
$$\height{I_{q}}=\height{I}<\height{I}+2=\height{q}=\dim{R_{q}},$$
$H_{I}^{d+2}(R)_{q}=0$ by \cite[Theorem 1.1]{bagheriyeh}, giving the first equality below
$$(\text{im}d_{1}')_{q}=(\text{ker}d_{2}')_{q}=\Gamma_{I}(D^{d+2})_{q}=E_{R}(R/q).$$
Finally,
$$\idim_{R_{q}}H_{I}^{d+1}(R)_{q}\leq \dim{\supp{(H_{I}^{d+1}(R)_{q}}}\leq \height{q}-(\height{I}+1+1) =\height{q}-\height{I}-2=0,$$
where the first inequality is by \cite[Corollary 3.9]{hunekeone} and \cite[Theorem 3.4.a]{lyubeznik} and the second inequality is by Lemma \ref{upperboundfordimensionofsupportoflocalcohomology}, so $H_{I}^{d+1}(R)_{q}=\oplus E_{R}(R/q)$ (which is a finite sum). Localizing $\ddagger\ddagger$ at $q$ we get
$$0\rightarrow \bigoplus E_{R}(R/q)\rightarrow \bigoplus E_{R}(R/q)\rightarrow E_{R}(R/q)\rightarrow 0,$$
finishing the proof.
\end{proof}

\section{Examples} \label{examples}
The goal of this section is to give some examples of local cohomology modules that satisfy some assumptions and conclusions that have shown up throughout the paper. The following follows from the fact that $\omega_{R}$ is resolved by the dualizing complex.

\begin{observation} Let $(R,m,k)$ be a CM local ring with canonical module $\omega_{R}$ and $n=\dim{R}$, let $I$ be an ideal with $d=\height{I}$, and let $D$ be the dualizing complex. If $H_{I}^{d}(\omega_{R})$ is the only nonzero local cohomology module, then $\Gamma_{I}(D)$ is an injective resolution of $H_{I}^{d}(\omega_{R})$. In particular, $\mu^{i}(m,H_{I}^{d}(\omega_{R}))=0$ for $i<n-d$.
\end{observation}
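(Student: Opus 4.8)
The plan is to feed the dualizing complex $D$ to the section functor $\Gamma_I(-)$ and let the vanishing hypothesis do the work. First I would recall from Lemma \ref{dualizingcomlexinjrescanmod} that $D$ is an injective resolution of $\omega_R$, so that $H^i(\Gamma_I(D)) \cong H^i_I(\omega_R)$ for all $i$. Next I would identify the terms of $\Gamma_I(D)$: each $D^j$ is a direct sum, each indecomposable summand occurring exactly once, of the modules $E_R(R/p)$ with $\dim R/p = n - j$, which---since $R$ is CM---is the same as $\height p = j$; and $\Gamma_I(E_R(R/p))$ equals $E_R(R/p)$ when $I \subseteq p$ and vanishes otherwise (Lemma \ref{noitorsion} covers the relevant vanishing). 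Hence $\Gamma_I(D^j) = \bigoplus_{p \supseteq I,\ \height p = j} E_R(R/p)$, which in particular is $0$ for $j < d$, since no prime of height $<\height I$ can contain $I$. Thus $\Gamma_I(D)$ is a complex of injective modules concentrated in cohomological degrees $\ge d$.

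Then I would invoke the hypothesis $H^i_I(\omega_R) = 0$ for $i \ne d$. This says precisely that the complex $\Gamma_I(D)$ is exact except in degree $d$, where its cohomology is $\ker\big(\Gamma_I(D^d) \to \Gamma_I(D^{d+1})\big) = H^d_I(\omega_R)$; equivalently, the augmented complex $0 \to H^d_I(\omega_R) \to \Gamma_I(D^d) \to \Gamma_I(D^{d+1}) \to \cdots$ is exact. Since every $\Gamma_I(D^j)$ is injective, this exhibits $\Gamma_I(D)$ (reindexed by $d$) as an injective resolution of $H^d_I(\omega_R)$, not necessarily the minimal one. This is the first assertion.

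For the ``in particular'' clause I would compute Bass numbers at $m$ directly from this resolution. For any prime $p \ne m$ one has $\Hom_R(k, E_R(R/p)) = 0$, because a nonzero submodule of $E_R(R/p)$ has associated prime $p$, not $m$. Therefore $\Hom_R(k, \Gamma_I(D^{d+i})) = \bigoplus_{p \supseteq I,\ \height p = d+i}\Hom_R(k, E_R(R/p)) = 0$ whenever $d + i \ne n$---in particular whenever $i < n - d$, since then no prime of height $d+i$ equals $m$---so $\ext^i_R(k, H^d_I(\omega_R))$, being the $i$-th cohomology of the complex $\Hom_R(k, \Gamma_I(D^{d+\bullet}))$, vanishes; this is exactly $\mu^i(m, H^d_I(\omega_R)) = 0$ for $i < n-d$.

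I do not expect a genuine obstacle here; the only points to handle with some care are the degree shift (the dualizing complex resolves $H^d_I(\omega_R)$ only after reindexing by $d$) and, should one prefer to phrase the last part in terms of the minimal injective resolution $E$ rather than computing $\ext$ from $\Gamma_I(D)$ directly, the standard fact that $E^i$ is a direct summand of $\Gamma_I(D^{d+i})$, which again forbids $E_R(k)$ from appearing in $E^i$ for $i < n-d$.
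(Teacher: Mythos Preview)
Your proposal is correct and is precisely an elaboration of the paper's one-line justification, which reads in full: ``The following follows from the fact that $\omega_{R}$ is resolved by the dualizing complex.'' You have filled in exactly the details that sentence gestures at---identifying $\Gamma_I(D^{d+i})$, using the vanishing hypothesis to get exactness, and reading off the Bass numbers from the shape of the resolution---so there is nothing to add.
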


The proof of the following was given to us by Mark Walker. 
\begin{lemma} \label{markspectralsequence} Let $R$ be a Noetherian local ring, $I\subseteq J$ be ideals, and let $M$ be a module. If there exists $j$ so that $H_{I}^{i}(M)=0$ for $i\neq j,j+1$, then there is an exact sequence
\begin{align*}
\dots&\rightarrow\ext_{R}^{t-j}(R/J,H_{I}^{j}(M)) \rightarrow\ext_{R}^{t}(R/J,M)\rightarrow\ext_{R}^{t-j-1}(R/J,H_{I}^{j+1}(M)) \\
&\rightarrow\ext_{R}^{t-j+1}(R/J,H_{I}^{j}(M))\rightarrow\dots.
\end{align*}
\end{lemma}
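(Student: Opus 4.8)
The plan is to realize the exact sequence as the long exact sequence coming from a spectral sequence with only two nonzero rows, namely the Grothendieck spectral sequence for the composition of $\Gamma_I(-)$ with $\Hom_R(R/J,-)$.

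First I would record the functorial identity $\Hom_R(R/J,X)=\Hom_R(R/J,\Gamma_I(X))$ for every $R$-module $X$: a homomorphism $R/J\to X$ has image annihilated by $J$, hence by $I$ since $I\subseteq J$, so it factors through $\Gamma_I(X)$. Thus the composite functor $\Hom_R(R/J,\Gamma_I(-))$ is literally $\Hom_R(R/J,-)$. Because $R$ is Noetherian, $\Gamma_I$ sends injective modules to injective modules, and injective modules are acyclic for $\Hom_R(R/J,-)$; hence the Grothendieck composite-functor spectral sequence applies and reads
$$E_2^{p,q}=\ext_R^p(R/J,H_I^q(M))\ \Longrightarrow\ \ext_R^{p+q}(R/J,M).$$

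Next I would invoke the hypothesis. Since $H_I^q(M)=0$ for $q\notin\{j,j+1\}$, the page $E_2$ is supported on the two rows $q=j$ and $q=j+1$. The only differential that can be nonzero is $d_2\colon E_2^{p,j+1}\to E_2^{p+2,j}$ (any $d_r$ with $r\geq 3$ shifts $q$ by at least $2$, hence vanishes), so $E_3=E_\infty$, with $E_\infty^{p,j}=\coker(d_2)$ and $E_\infty^{p,j+1}=\ker(d_2)$. A spectral sequence concentrated on two adjacent rows splices into a long exact sequence relating the abutment to the two rows with connecting map $d_2$: from the filtration on $\ext_R^t(R/J,M)$, whose only nonzero graded pieces sit in filtration degrees $t-j$ and $t-j-1$, one gets $0\to E_\infty^{t-j,j}\to \ext_R^t(R/J,M)\to E_\infty^{t-j-1,j+1}\to 0$ for each $t$, and splicing these across all $t$ via the $d_2$ maps, together with the substitutions $E_2^{p,j}=\ext_R^p(R/J,H_I^j(M))$ and $E_2^{p,j+1}=\ext_R^p(R/J,H_I^{j+1}(M))$, produces exactly the displayed sequence, the map $\ext_R^{t-j-1}(R/J,H_I^{j+1}(M))\to \ext_R^{t-j+1}(R/J,H_I^j(M))$ being $d_2$.

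The only delicate part is the index bookkeeping: checking that the two short exact sequences splice so that the connecting homomorphism lands in $\ext_R^{t-j+1}$ of the $j$-th row and that the resulting sequence has precisely the stated shape. I expect no conceptual obstacle here. As an alternative that avoids spectral sequences altogether, the functorial identity above upgrades to $R\Hom_R(R/J,M)\simeq R\Hom_R(R/J,R\Gamma_I(M))$ in the derived category (using that $\Gamma_I$ of an injective resolution of $M$ is a complex of injectives computing $R\Gamma_I(M)$); applying $R\Hom_R(R/J,-)$ to the truncation triangle $H_I^j(M)[-j]\to R\Gamma_I(M)\to H_I^{j+1}(M)[-j-1]\xrightarrow{+1}$ and taking cohomology gives the same long exact sequence directly.
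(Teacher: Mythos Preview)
Your proof is correct and follows essentially the same approach as the paper: the paper simply cites \cite[Proposition 3.1]{huneke} for the spectral sequence $E_2^{p,q}=\ext_R^p(R/J,H_I^q(M))\Rightarrow\ext_R^{p+q}(R/J,M)$ and then says the lemma follows from the two-row hypothesis, whereas you spell out both the construction of the spectral sequence (via the identity $\Hom_R(R/J,-)=\Hom_R(R/J,\Gamma_I(-))$ and the Grothendieck composite-functor machinery) and the standard two-row argument. Your alternative derived-category argument via the truncation triangle is a nice bonus that the paper does not mention.
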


\begin{proof}
The proof of \cite[Proposition 3.1]{huneke} gives a spectral sequence
$$E_{2}^{p,q}=\ext_{R}^{p}(R/J,H_{I}^{q}(M))\implies \ext_{R}^{p+q}(R/J,M).$$
The lemma now follows from the assumption $H_{I}^{i}(M)=0$ for $i\neq j,j+1$.
\end{proof}

The proof of the following is very similar to the proof of \cite[Proposition 3.1]{huneke}.
\begin{lemma} \label{dthlocalcohoicofiniteimpliesdplusonelocalcohocofinite} Let $(R,m,k)$ be a CM local ring with canonical module $\omega_{R}$ and $n=\dim{R}$, let $I\subseteq J$ be ideals with $R/J$ CM and $d=\height{I}$, and assume $H_{I}^{i}(\omega_{R})=0$ for $i\neq d,d+1$. Then
$$\ext_{R}^{i-2}(R/J,H_{I}^{d+1}(\omega_{R}))\cong\ext_{R}^{i}(R/J,H_{I}^{d}(\omega_{R}))$$
for $i<\height{J}-d$ and $i>\height{J}-d+1$. In particular, if $J$ is prime, then
$$\mu^{i-2}(J,H_{I}^{d+1}(\omega_{R}))=\mu^{i}(J,H_{I}^{d}(\omega_{R}))$$
for $i<\height{J}-d$ and $i>\height{J}-d+1$.
\end{lemma}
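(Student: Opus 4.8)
The plan is to apply Lemma~\ref{markspectralsequence} with $M=\omega_R$ and $j=d$; this is legitimate precisely because the hypothesis $H_I^i(\omega_R)=0$ for $i\neq d,d+1$ is exactly what that lemma requires. It produces a long exact sequence, and I will use the portion
\[
\ext_R^{t}(R/J,\omega_R)\to\ext_R^{t-d-1}(R/J,H_I^{d+1}(\omega_R))\to\ext_R^{t-d+1}(R/J,H_I^{d}(\omega_R))\to\ext_R^{t+1}(R/J,\omega_R)
\]
of it. The next step is to kill the two outer terms. Since $R$ is CM we have $\grade J=\height J$, and since $R/J$ is a (nonzero) CM $R$-module, the description of Ext into the canonical module (e.g.\ \cite[Theorem 3.3.10]{brunsherzog}) gives $\ext_R^t(R/J,\omega_R)=0$ for every $t\neq\height J$.

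Substituting $t=i+d-1$ makes the two outer terms $\ext_R^{i+d-1}(R/J,\omega_R)$ and $\ext_R^{i+d}(R/J,\omega_R)$, and these vanish exactly when $i+d-1\neq\height J$ and $i+d\neq\height J$, that is, when $i<\height J-d$ or $i>\height J-d+1$. In that range exactness forces the middle arrow to be an isomorphism
\[
\ext_R^{i-2}(R/J,H_I^{d+1}(\omega_R))\cong\ext_R^{i}(R/J,H_I^{d}(\omega_R)),
\]
which is the first claim. For the ``in particular'', take $J$ prime; since $R/J$ is finitely presented, localizing this isomorphism at $J$ yields an isomorphism of $k(J)$-vector spaces $\ext_{R_J}^{i-2}(k(J),H_I^{d+1}(\omega_R)_J)\cong\ext_{R_J}^{i}(k(J),H_I^{d}(\omega_R)_J)$, and taking dimensions together with $\mu^s(J,N)=\dim_{k(J)}\ext_{R_J}^{s}(k(J),N_J)$ gives the asserted equality of Bass numbers in the same range.

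I expect the one point that needs care to be the vanishing $\ext_R^t(R/J,\omega_R)=0$ for $t\neq\height J$: it genuinely uses both hypotheses --- the Cohen--Macaulayness of $R$ (to get $\grade J=\height J$) and the Cohen--Macaulayness of $R/J$ (so the Ext sits in the single degree $\grade J$ rather than a range of degrees). Everything else is formal bookkeeping with the long exact sequence of Lemma~\ref{markspectralsequence}, so I do not anticipate a real obstacle.
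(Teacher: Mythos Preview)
Your proposal is correct and follows essentially the same approach as the paper: both apply Lemma~\ref{markspectralsequence} with $M=\omega_R$ and $j=d$, then kill the outer $\ext_R^{\bullet}(R/J,\omega_R)$ terms by showing they vanish away from degree $\height J$. The only cosmetic difference is that the paper obtains this vanishing by combining \cite[Theorem 17.1]{matsumura} (for the lower bound) with \cite[Exercise 3.1.24]{brunsherzog} (for the upper bound), whereas you invoke \cite[Theorem 3.3.10]{brunsherzog} directly; your treatment of the ``in particular'' via localization is also slightly more explicit than the paper's.
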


\begin{proof}
Lemma \ref{markspectralsequence} gives a long exact sequence
\begin{align*}
\dots&\rightarrow\ext_{R}^{t-d}(R/J,H_{I}^{d}(\omega_{R})) \rightarrow\ext_{R}^{t}(R/J,\omega_{R})\rightarrow\ext_{R}^{t-d-1}(R/J,H_{I}^{d+1}(\omega_{R})) \\
&\rightarrow\ext_{R}^{t-d+1}(R/J,H_{I}^{d}(\omega_{R}))\rightarrow\dots.
\end{align*}
Note that 
$$\height{J}=\dim{R}-\dim{R/J}=\depth{R}-\depth{R/J}$$
(since $R/J$ is CM) and $\ext_{R}^{i}(R/J,\omega_{R})=0$ for $i<\depth{R}-\dim{R/J}$ by \cite[Theorem 17.1]{matsumura}. Also,
$$\sup\{i|\ext_{R}^{i}(R/J,\omega_{R})\neq 0\}=\depth{R}-\depth{R/J}$$
by \cite[Exercise 3.1.24]{brunsherzog}, so $\ext_{R}^{i}(R/J,\omega_{R})=0$ for $i>\depth{R}-\depth{R/J}$. Therefore $\ext_{R}^{i}(R/J,\omega_{R})\neq 0$ for $\height{J}$, and the result follows.
\end{proof}

Recall that in Lemma \ref{firsttwobassnumbersanddplusonelocalcoho} we showed that if $(R,m,k)$ is a CM local ring with canonical module $\omega_{R}$ and $n=\dim{R}$, if $I$ is an ideal with $d=\height{I}$, if $\height{p}=d$ for every minimal prime $p$ of $I$, and if $n-d\geq 3$, then $m\in\ass{H_{I}^{d+1}(\omega_{R})}$ if and only if $\mu^{2}(m,H_{I}^{d}(\omega_{R}))\neq 0$. The following example gives a case where $H_{I}^{d+1}(\omega_{R})\neq 0$.
\begin{example} Let $k$ be a field of characteristic zero, let $X$ be a $2\times 3$ matrix of indeterminates, let $R=k[X]$, let $m$ be the maximal homogeneous ideal, and let $I$ be the ideal generated by the maximal minors of $X$. Then $H_{I}^{2}(R)$ and $H_{I}^{3}(R)$ are the only two nonzero local cohomology modules by \cite[Theorem 1.1]{witt}. This tells us that $\height{I}=2$, so $\dim{R}-\height{I}=6-2=4$. Also, \cite[Theorem 1.1]{witt} gives $H_{I}^{3}(R)\cong E_{R}(k)$, so $m\in\ass{H_{I}^{3}(R)}$ and $\mu^{0}(m,H_{I}^{3}(R))\neq 0$. Since
$$2<4=\dim{R}-\height{I}=\height{m}-\height{I},$$
Lemma \ref{dthlocalcohoicofiniteimpliesdplusonelocalcohocofinite} gives the equality below
$$\mu^{2}(m,H_{I}^{\height{I}}(R))=\mu^{0}(m,H_{I}^{\height{I}+1}(R))\neq 0.$$
\end{example}

Recall that in Lemma \ref{firsttwobassnumbersanddplusonelocalcoho} we showed that if $(R,m,k)$ is a CM local ring with canonical module $\omega_{R}$ and $n=\dim{R}$, if $I$ is an ideal with $d=\height{I}$, if $\height{p}=d$ for every minimal prime $p$ of $I$, and if $H_{I}^{d+1}(\omega_{R})=0$, then $\ass{E^{2}}=\{q \ | \ q\supseteq I,\height{q}=d+2\}$. Lemma \ref{firsttwobassnumbersanddplusonelocalcoho} also shows that for $i=0,1$ we have
\[\mu^{i}(q,H_{I}^{d}(\omega_{R}))= \begin{cases} 
      1 & q\supseteq I, \height{q}=d+i \\
      0 & \text{otherwise} 
   \end{cases}
\]
The following example gives a case where $H_{I}^{d+1}(\omega_{R})=0$.
\begin{example} Let $k$ be a field of characteristic zero, let $X$ be a $2\times 7$ matrix of indeterminates, let $R=k[X]$, let $m$ be the maximal homogeneous ideal, and let $I$ be the ideal generated by the maximal minors of $X$. Then $H_{I}^{6}(R)$ and $H_{I}^{11}(R)$ are the only two nonzero local cohomology modules by \cite[Theorem 1.1]{witt}. This tells us that $\height{I}=6$ and $H_{I}^{\height{I}+1}(\omega_{R})=0$. Also, $I$ is prime by \cite[Theorem 1]{hochstereagon}. Now localizing at $m$, we see that
$$0\rightarrow \Gamma_{I}(D^{d})\rightarrow \Gamma_{I}(D^{d+1})\rightarrow \Gamma_{I}(D^{d+2})\rightarrow \Gamma_{I}(D^{d+3})$$
is the beginning of an injective resolution of $H_{I}^{d}(R)_{m}$, where $D$ is the dualizing complex of $R_{m}$ and $d=\height{I}$. This gives
\[\mu^{i}(q,H_{I}^{d}(R))= \begin{cases} 
      1 & m\supseteq q\supseteq I, \height{q}=d+i \\
      0 & \text{otherwise} 
   \end{cases}
\]
for $i=0,1,2$.
\end{example}

The following gives an example where $m\notin\ass{H_{I}^{\height{I}+1}(R)}$ and $\dim{R}-\height{I}=3$.
\begin{example} Let $k$ be a field, let $R=k[u,v,w,x,y,z]$, let $J_{1}=(u,v,w)$, let $J_{2}=(x,y,z)$, and let $I=J_{1}\cap J_{2}$. Consider the Mayer Vietoris exact sequence
\[\begin{tikzcd}
	0 && {H_{J_{1}+J_{2}}^{0}(R)} && {H_{J_{1}}^{0}(R)\bigoplus H_{J_{2}}^{0}(R)} && {H_{J_{1}\cap J_{2}}^{0}(R)} \\
	\\
	{} && {H_{J_{1}+J_{2}}^{1}(R)} && {H_{J_{1}}^{1}(R)\bigoplus H_{J_{2}}^{1}(R)} && {H_{J_{1}\cap J_{2}}^{1}(R)} && \cdots.
	\arrow[from=1-1, to=1-3]
	\arrow[from=1-3, to=1-5]
	\arrow[from=1-5, to=1-7]
	\arrow[from=3-1, to=3-3]
	\arrow[from=3-3, to=3-5]
	\arrow[from=3-5, to=3-7]
	\arrow[from=3-7, to=3-9]
\end{tikzcd}\]
Since $J_{1}+J_{2}=(u,v,w,x,y,z)$, $H_{J_{1}+J_{2}}^{i}(R)=0$ for $i\neq \dim{R}$. Also, both $J_{1}$ and $J_{2}$ are generated by three elements, so $H_{J_{1}}^{i}(R)=H_{J_{2}}^{i}(R)=0$ for $i>3$. Putting these two facts together and using the Mayer Vietoris exact sequence, we get $H_{I}^{4}(R)=0$, so $m\notin\ass{H_{I}^{4}(R)}$.
\end{example}

\begin{definition} A Noetherian ring $R$ satisfies Serre's condition $(S_{l})$ if $\depth{R_{p}}\geq\min\{l,\dim{R_{p}}\}$ for all primes $p$.
\end{definition}

The following gives an example where $\mu^{2}(m,H_{I}^{\height{I}}(R))\neq 0$ and $\dim{R}-\height{I}=3$.
\begin{example} \label{alexandraexample} Let $R=\mathbb{Q}[x_{1},x_{2},x_{3},x_{4},x_{5},x_{6}]$, $m=(x_{1},x_{2},x_{3},x_{4},x_{5},x_{6})$, and consider the simplicial complex
$$\{4,5,6\}, \{3,5,6\}, \{2,3,5\}, \{2,3,4\}, \{1,3,4\}, \{2,4,6\}.$$
Then the ideal corresponding to this simplicial complex is 
$$I=(x_{1}x_{2},x_{1}x_{5},x_{2}x_{4}x_{5},x_{3}x_{4}x_{5},x_{1}x_{6},x_{2}x_{3}x_{6},x_{3}x_{4}x_{6},x_{2}x_{5}x_{6})\subseteq R.$$
Then $R/I$ is $(S_{2})$ by \cite[Example 39]{holmes} and all the associated primes of $R/I$ have the same height, but $R/I$ is not CM since $\dim{R/I}=3$ and $\depth{R/I}=2$. The following equality is by \cite[Theorem 1.1]{yanagawa}
$$\mu^{2}(m,H_{I}^{3}(R))=\dim_{\mathbb{Q}}[\ext_{R}^{4}(\ext_{R}^{3}(R/I,R))]_{0},$$
and by a computation in Macaulay2 we get
$$\dim_{\mathbb{Q}}[\ext_{R}^{4}(\ext_{R}^{3}(R/I,R))]_{0}=1>0.$$
\end{example}

The following generalizes Example \ref{alexandraexample}.
\begin{proposition} \label{alexandraproposition} Let $R=k[x_{1},\dots,x_{n}]$ where $k$ is a field and let $I$ be a square free monomial ideal with $d=\height{I}$, $R/I$ $(S_{2})$, and $n-d=3$. Assume all the minimal primes of $I$ have the same height. If $R/I$ is not CM, then $\mu^{2}(m,H_{I}^{d}(R))\neq 0$ where $m=(x_{1},\dots,x_{n})$. 
\end{proposition}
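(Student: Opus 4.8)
The plan is to generalize the argument of Example~\ref{alexandraexample}: express $\mu^{2}(m,H_{I}^{d}(R))$ via Yanagawa's description of Bass numbers of local cohomology of squarefree monomial ideals, and show the resulting graded vector space is nonzero by pushing the failure of Cohen--Macaulayness through the canonical module of $R/I$. Throughout one uses that $R$ is a polynomial ring, hence regular, so that the earlier results apply (after localizing at $m$), and that $R/I$ is equidimensional of dimension $n-d=3$ because every minimal prime of $I$ has height $d$.

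First I would pin down the depth data. Since $R/I$ satisfies $(S_{2})$ and $\dim R/I=3$, we have $\depth R/I\ge\min\{2,3\}=2$; since $R/I$ is not CM, $\depth R/I<3$; hence $\depth R/I=2$ and $\pdim_{R}(R/I)=n-2=d+1$ by Auslander--Buchsbaum. Now consider the canonical module $\omega_{R/I}=\ext^{d}_{R}(R/I,\omega_{R})$ of $R/I$, which is nonzero since $d=\grade I$; it is equidimensional of dimension $3$ and satisfies $(S_{2})$. I claim $\depth_{R}\omega_{R/I}=2$ too: it is $\ge 2$ by $(S_{2})$, and if it equalled $3$ then $\omega_{R/I}$ would be a maximal Cohen--Macaulay (hence Cohen--Macaulay) module, so $\ext^{d}_{R}(\omega_{R/I},\omega_{R})$ would be Cohen--Macaulay as well; but applying the canonical‑module operation twice to $R/I$ returns $R/I$ (as $R/I$ is equidimensional and $(S_{2})$), contradicting that $R/I$ is not CM. Therefore $\pdim_{R}\omega_{R/I}=n-2=d+1$, so the top Ext module $\ext^{d+1}_{R}(\omega_{R/I},R)$ is nonzero; and by the standard characterization of the Serre condition $(S_{2})$ via the dimensions of the deficiency modules $\ext^{n-i}_{R}(-,\omega_{R})$, applied to the equidimensional $(S_{2})$ module $\omega_{R/I}$ of dimension $3$, the module $\ext^{n-2}_{R}(\omega_{R/I},R)=\ext^{d+1}_{R}(\omega_{R/I},R)$ has Krull dimension $\le 0$; being finitely generated, it is a \emph{nonzero, finite length} module.

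Finally I would invoke Yanagawa's formula \cite[Theorem~1.1]{yanagawa} — the same input used to evaluate $\mu^{2}(m,H_{I}^{3}(R))$ in Example~\ref{alexandraexample} — which for the squarefree monomial ideal $I$ writes $\mu^{2}(m,H_{I}^{d}(R))$ as the $k$-dimension of a single $\mathbb{Z}^{n}$-graded component of the iterated Ext module $\ext^{d+1}_{R}(\ext^{d}_{R}(R/I,R),R)$, which is $\ext^{d+1}_{R}(\omega_{R/I},R)$ up to the standard degree shift. Since iterated Ext modules of squarefree modules are again squarefree, a finite length such module is concentrated in exactly the one $\mathbb{Z}^{n}$-degree picked out by the formula; as this module is nonzero by the previous paragraph, that component is nonzero, so $\mu^{2}(m,H_{I}^{d}(R))\neq 0$. (Alternatively, combine Lemma~\ref{firsttwobassnumbersanddplusonelocalcoho}(5), the identity $\operatorname{cd}(I)=\pdim_{R}(R/I)$ for squarefree monomial ideals, and Proposition~\ref{nminusdequalsthreewithfourequivalentconditions}: the hypotheses force $H_{I}^{j}(R)=0$ for $j\neq d,d+1$ with $H_{I}^{d+1}(R)\neq 0$, and the nonzero finite length deficiency module $\ext^{d+1}_{R}(R/I,R)$ embeds — via the squarefree/straight‑module structure of $H_{I}^{d+1}(R)$ — into $H_{I}^{d+1}(R)$, whence $m\in\ass H_{I}^{d+1}(R)$ and so $\mu^{2}(m,H_{I}^{d}(R))=\mu^{0}(m,H_{I}^{d+1}(R))\neq 0$.)

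The step I expect to be the main obstacle is the last one: reconciling the grading conventions so that the nonvanishing finite length deficiency module of $\omega_{R/I}$ really is the graded strand that Yanagawa's formula reads off (equivalently, making precise the embedding $\ext^{j}_{R}(R/I,R)\hookrightarrow H_{I}^{j}(R)$ coming from the squarefree structure), together with carefully citing the folklore used along the way — that for an equidimensional $(S_{2})$ quotient $A$ of a regular ring one has $A$ Cohen--Macaulay iff $\omega_{A}$ is Cohen--Macaulay, and the dimension bounds on deficiency modules that encode the Serre condition $(S_{2})$.
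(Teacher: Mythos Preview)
Your argument is correct but takes a different route from the paper's. Both proofs hinge on the fact that $\omega_{R/I}=\ext_R^d(R/I,\omega_R)$ is not Cohen--Macaulay (you deduce this from biduality for equidimensional $(S_2)$ modules; the paper quotes the equivalent statement from the discussion preceding \cite[Corollary~3.16]{yanagawa}). From there the paper invokes \cite[Corollary~3.18]{yanagawa}, which characterizes Cohen--Macaulayness of $\omega_{R/I}$ by the vanishing of $\mu^i(p,H_I^d(R))$ whenever $\height p\neq d+i$; since this condition fails for \emph{some} monomial prime $p$ and some $i$, the paper then uses the structural constraints on the injective resolution established earlier (Lemma~\ref{whenislocalcohocm} for $i=0,1$ and Dorreh's formula $\idim_{R_q}H_I^d(R)_q=\height q-d$ for the upper cutoff) to pigeonhole that offending pair to $(i,p)=(2,m)$. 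You instead compute directly: show $\ext_R^{d+1}(\omega_{R/I},R)$ is nonzero of finite length via Auslander--Buchsbaum and the deficiency-module bounds encoding $(S_2)$, identify it with the iterated Ext in \cite[Theorem~1.1]{yanagawa}, and use that a finite-length squarefree module is a $k$-vector space concentrated in multidegree zero. Your approach is more self-contained and yields the Bass number as an explicit vector-space dimension; the paper's approach illustrates how the result sits inside the framework of Section~\ref{maintheorem}. Your grading worry resolves cleanly: the two canonical-module twists cancel, so $\ext_R^{d+1}(\ext_R^d(R/I,R),R)\cong\ext_R^{d+1}(\omega_{R/I},\omega_R)$ is genuinely squarefree, and being nonzero of finite length it lives entirely in the degree Yanagawa's formula picks out.
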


\begin{proof} 
Let $q\supseteq I$ be a prime ideal, let $M=H_{I}^{d}(R)$, and let 
$$E=0\rightarrow E^{0}\rightarrow E^{1}\rightarrow E^{2}\rightarrow E^{3}\rightarrow 0$$
be a minimal injective resolution of $M$. Note that $d=\height{I_{q}}$ since every minimal prime of $I$ has the same height. We start by proving four claims.\newline

\underline{\textbf{Claim 1}} If $\mu^{i}(q,M)\neq 0$ for $i=0$ or $i=1$, then $\height{q}=d+i$.\newline
Let $m\supseteq q$ be a maximal ideal. Then $M_{m}\cong H_{I_{m}}^{\height{I_{m}}}(R_{m})$. The claim now follows from Lemma \ref{whenislocalcohocm} since 
$$\ass{E^{i}}=\{q \ | \ q\supseteq I,\height{q}=d+i\}$$
for $i=0,1$.

\underline{\textbf{Claim 2}} If $\height{q}<n$, then $\mu^{n-d}(q,M)=0$.\newline
Since $M_{q}\cong H_{I_{q}}^{\height{I_{q}}}(R_{q})$,
$$\idim_{R_{q}}M_{q}=\height{q}-d$$
by \cite[Proposition 3.5]{dorreh}, so $\mu^{i}(q,M)=0$ for $i>\height{q}-d$.

\underline{\textbf{Claim 3}} $\mu^{n-d}(m,M)\neq 0$ for all maximal ideals $m$ containing $I$.\newline
If $m$ is a maximal ideal, then $M_{q}\cong H_{I_{m}}^{\height{I_{m}}}(R_{m})$, so
$$\idim_{R_{m}}M_{m}=\height{m}-d$$
by \cite[Proposition 3.5]{dorreh}. By claim 2 $\mu^{n-d}(q,M)=0$ if $\height{q}<n$, so we must have $\mu^{n-d}(m,M)\neq 0$.

\underline{\textbf{Claim 4}} If $i=0,1,2$, then $\mu^{j}(q,M)=0$ for $j\neq \height{q}-d+i$.\newline
This is an immediate consequence of Claim 1 and Claim 2.\newline

The four claims have now been proved. By the discussion before Corollary 3.16 in \cite{yanagawa}, $R/I$ is CM if and only if $R/I$ is $(S_{2})$ and $\ext_{R}^{d}(R/I,\omega_{R})$ is CM. By assumption $R/I$ is $(S_{2})$ but not CM, so $\ext_{R}^{d}(R/I,\omega_{R})$ is not CM. Also, since $R/I$ is $(S_{2})$, $R/I$ has pure dimension by \cite[Lemma 2.6]{murai}, so \cite[Corollary 3.18]{yanagawa} can be applied. Since $\ext_{R}^{d}(R/I,\omega_{R})$ is not CM, the following property fails by \cite[Corollary 3.18]{yanagawa}:
\begin{quote}
$\mu^{i}(p,H_{I}^{d}(R))=0$ for all $i$ and all prime ideals $p$ with $\dim{R/p}\neq n-d-i$.
\end{quote}
Since $n-\height{p}=\dim{R/p}$, the following property fails:
\begin{quote}
$\mu^{i}(p,H_{I}^{d}(R))=0$ for all $i$ and all prime ideals $p$ with $\height{p}\neq d+i$.
\end{quote}
By the proof of \cite[Corollary 3.18]{yanagawa} and by \cite[Theorem 1.2.3]{goto}, the property above is equivalent to the following property:
\begin{quote}
$\mu^{i}(p,H_{I}^{d}(R))=0$ for all $i$ and all monomial prime ideals $p$ with $\height{p}\neq d+i$.
\end{quote}
This tells us that $\mu^{i}(p,H_{I}^{d}(R))\neq 0$ for some monomial prime ideal $p$ with $\height{p}\neq d+i$. Using the claims, we now see that $\mu^{2}(m,H_{I}^{d}(R))\neq 0$ where $m=(x_{1},\dots,x_{n})$.
\end{proof}

\end{document}